\theoremstyle{thmstyleone}%
\newtheorem{theorem}{Theorem}[section]
\newtheorem{remark}{Remark}[section]
\newtheorem{lemma}{Lemma}[section]
\newtheorem{definition}{Definition}[section]
\numberwithin{equation}{section}
\begin{document}

\title[Convergence Analysis of the Self-Adaptive Projection Method for Variational Inequalities with Non-Lipschitz Continuous Operators]{Convergence Analysis of the Self-Adaptive Projection Method for Variational Inequalities with Non-Lipschitz Continuous Operators}

\author[1]{\fnm{Meiying} \sur{Wang}}\email{xdwangmeiying@163.com}

\author*[1]{\fnm{Hongwei} \sur{Liu}}\email{hwliuxidian@163.com}

\author[2]{\fnm{Jun} \sur{Yang}}\email{xysyyangjun@163.com}

\affil[1]{\orgdiv{School of Mathematics and Statistics}, \orgname{Xidian University}, \orgaddress{\city{Xi'an}, \postcode{710126}, \state{Shaanxi}, \country{China}}}


\affil[2]{\orgdiv{School of Mathematics and Statistics}, \orgname{Xianyang Normal University}, \orgaddress{\city{Xianyang}, \postcode{712000}, \state{Shaanxi}, \country{China}}}


\abstract{In this paper, we employ Tseng's extragradient method with the self-adaptive stepsize to solve variational inequality problems involving non-Lipschitz continuous and quasimonotone operators in real Hilbert spaces. The convergence of the proposed method is analyzed under some mild assumptions. The key advantages of the method are that it does not require the operator associated with the variational inequality to be Lipschitz continuous and that it adopts the self-adaptive stepsize. Numerical experiments are also provided to illustrate the effectiveness and superiority of the method.
}
\keywords{Variational inequality, Non-Lipschitz continuous, Quasimonotone operator, Self-adaptive stepsize, Convergence analysis}


\pacs[MSC Classification]{47H05, 47J20, 47J25, 90C25}

\maketitle

\section{Introduction}\label{sec1}
This paper focuses on the classical variational inequality associated with the operator $F: H\to H$, denoted by $VI(C, F)$, which is described as finding a point $x^\ell \in C$ such that
\begin{equation*}
  \langle F(x^\ell), w - x^\ell \rangle \geq 0, \ \forall w\in C,
\end{equation*}
where $C$ is a nonempty, closed, and convex subset of a real Hilbert space $H$. \\
Let $S$ be the solution set of $VI(C, F)$, and define $S_D$ as the set of solutions to the dual variational inequality: find $z^\ell \in C$ such that
\begin{equation*}
  \langle F(v), v - z^\ell \rangle \geq 0, \ \forall v \in C.
\end{equation*}
It is evident that $S_D$ forms a closed and convex set, although it may be empty. When $F$ is continuous and $C$ is convex, the inclusion $S_D \subseteq S$ holds. Furthermore, if $F$ is both pseudomonotone and continuous, then $S_D = S$ \cite{co}. However, it should be noted that the converse inclusion $S \subseteq S_D$ does not necessarily hold when $F$ is quasimonotone and continuous \cite{ye}.

Variational inequalities can effectively model a broad class of equilibrium and optimization problems, and have consequently become a central framework in both theoretical and applied disciplines. Due to their extensive applications in areas such as economics, transportation systems, and engineering mechanics \cite{kinderlehrer,konnov,vip}, a wide range of methods have been developed and investigated for solving $VI(C, F)$, such as \cite{ye,kor,sol,cen,tseng2000,mali,dong,yang2018,yang3,shehu,yao,Tan2023,shehu2019, yang2019strong}.

One well-known method for solving variational inequalities in finite dimensional spaces is the extragradient algorithm, originally introduced by Korpelevich \cite{kor} and independently by Antipin \cite{an}. This method applies to cases where $F$ satisfies monotonicity and Lipschitz conditions, and each iteration involves performing two projections onto the constraint set $C$. However, computing these projections can be challenging or infeasible in practice when $C$ has a complex structure or lacks an explicit projection formula. Censor et al. \cite{cen} proposed an alternative to the classical extragradient scheme, known as the subgradient extragradient method, where the usual second projection onto the feasible set $C$ is replaced by one onto a dynamically generated half space $T_n$.
This approach retains convergence guarantees while reducing computational complexity. The iterative steps are given by:
\begin{equation}\label{1.3}
\begin{cases}
w_n = P_C(z_n - \lambda F(z_n)), \\
T_n = \{v \in H : \langle z_n - \lambda F(z_n) - w_n, v - w_n \rangle \leq 0\},\\
z_{n+1} = P_{T_n}(z_n - \lambda F(w_n)),
\end{cases}
\end{equation}
where the stepsize parameter $\lambda$ satisfies $0 < \lambda < \frac{1}{L}$, with $L$ denoting the Lipschitz constant of the monotone operator $F$.\\
An alternative strategy was introduced by Tseng in \cite{tseng2000}. Instead of using the second projection, this method updates the iterate through an explicit correction step, described as:
\begin{equation}\label{1.4}
\begin{cases}
w_n = P_C(z_n - \lambda F(z_n)), \\
z_{n+1} = w_n - \lambda (F(w_n) - F(z_n)),
\end{cases}
\end{equation}
with the same condition $\lambda \in (0, \frac{1}{L})$.

The step sizes used in (\ref{1.3}) and (\ref{1.4}) depend on the Lipschitz constant of the operator $F$, which is often unknown or hard to compute in practice. To address this problem, Liu and Yang \cite{yang3} introduced a modification of algorithm (\ref{1.4}) by incorporating a non-monotonic self-adaptive stepsize strategy:
\begin{equation}\label{liuyang}
\lambda_{n+1}=
\begin{cases}
\min\{\frac{\mu\|z_n-w_n\|}{\|F(z_n)-F(w_n)\|},\ \lambda_n+\xi_n\},\ \ \text{if}\ F(z_n)-F(w_n)\neq 0,\\
\lambda_n+\xi_n,\qquad\qquad\qquad\qquad\qquad\ \text{otherwise},
\end{cases}
\end{equation}
where $F$ is Lipschitz continuous operator, $\mu\in(0,1)$ and $\{\xi_n\}\subset [0,\infty)$ such that $\sum\limits_{n=1}^{\infty}\xi_n<\infty$.\\
It is well known that methods (\ref{1.3}), (\ref{1.4}), and (\ref{liuyang}) all require the operator $F$ to be Lipschitz continuous. However, when $F$ fails to satisfy this condition, these methods may no longer be applicable. To overcome this limitation, Iusem \cite{Iusem} proposed a linesearch-based approach to solve variational inequalities in finite dimensional spaces, where the operator $F$ is assumed to be monotone and continuous. This algorithm is described as follows:
\begin{equation}\label{1.5}
\begin{cases}
w_n = P_C(z_n - \eta_n F(z_n)), \\
z_{n+1} = P_C(z_n - \lambda_n F(w_n)),
\end{cases}
\end{equation}
where $\eta_n:= \eta l^{m_n}$, $\eta>0$ and $m_n$ is the smallest non-negative integer $m$ satisfying for $\mu, l\in(0,1)$,
\begin{equation*}\label{1.6}
 \eta l^{m}\|F(z_n)-F(w_n)\|\leq \mu\|z_n-w_n\|,
\end{equation*}
and
\begin{equation*}
\lambda_{n}:=\frac{\langle F(w_n), z_n-w_n\rangle}{\|F(w_n)\|^2}.
\end{equation*}
The author conducted the convergence analysis in finite dimensional space. Subsequently, various extensions have been investigated by many researchers; see, for example, \cite{thong201911, thong2020weak, cai2021strong, xie2025modified}.
Among them, Thong et al. \cite{thong201911} employed the Tseng's extragradient algorithm combined with the linesearch technique to solve variational inequalities in real Hilbert spaces, where the operator is pseudomonotone and non-Lipschitz continuous. The detailed iteration schemes are as follows:
\begin{equation}\label{1.7}
\begin{cases}
w_n = P_C(z_n - \lambda_n F(z_n)), \\
z_{n+1} = w_n - \lambda_n(F(w_n) - F(z_n)),
\end{cases}
\end{equation}
where $\lambda_n:= \lambda l^{m_n}$, $\lambda>0$ and $m_n$ is the smallest non-negative integer $m$ satisfying for $\mu, l\in(0,1)$,
\begin{equation*}\label{1.8}
 \lambda l^{m}\|F(z_n)-F(w_n)\|\leq \mu\|z_n-w_n\|.
\end{equation*}
Although methods (\ref{1.5}) and (\ref{1.7}) are effective for solving variational inequalities with non-Lipschitz continuous operators, it is evident that both methods rely on linesearch techniques to determine the stepsizes, including the algorithms in \cite{ thong2020weak, cai2021strong, xie2025modified}. However, a major drawback of this approach is that once the linesearch procedure is initiated, multiple projections and function value computations are required during the calculation, which can significantly increase the computational cost. Therefore, adopting the self-adaptive stepsize method becomes both necessary and advantageous.

The objective of this paper is to solve variational inequalities involving quasimonotone and non-Lipschitz continuous operators in real Hilbert spaces by employing Tseng's extragradient method combined with the self-adaptive stepsize. Our analysis demonstrates the convergence of the sequences generated by the algorithm in finite dimensions, and also shows strong convergence in infinite dimensional Hilbert spaces under appropriate conditions.

The subsequent sections of this paper are organized as follows: Section \ref{sec2} provides the necessary definitions and foundational results that will be used throughout the analysis. In Section \ref{sec3}, we propose the algorithm and the associated assumptions for solving variational inequalities with quasimonotone and non-Lipschitz continuous operators. Section \ref{sec4} is devoted to the convergence of the established methods under some assumptions. Finally, in Section \ref{sec55}, we validate the performance and efficiency of the algorithm through numerical experiments.

\section{Preliminaries}\label{sec2}
This section presents several fundamental concepts and lemmas that play a crucial role in our subsequent analysis. Throughout the paper, the notation $ ``\rightharpoonup"$ indicates weak convergence, while $``\to"$ denotes strong convergence.
\begin{definition}
An operator $F: H\to H$ is said to be
\\ \emph{(i)} monotone if
\begin{equation*}
  \langle F(w)-F(z), w-z \rangle\geq0,\quad \forall w,\ z\in H.
\end{equation*}
\\ \emph{(ii)} $\eta$-strongly pseudomonotone if there exists a positive constant $\eta$ such that
\begin{equation*}
  \langle F(w), z-w \rangle\geq 0\Longrightarrow \langle F(z), z-w \rangle\geq\eta\|w-z\|^2,\quad \forall w,\ z\in H.
\end{equation*}
\\ \emph{(iii)} pseudomonotone if
\begin{equation*}
  \langle F(w), z-w \rangle\geq0\Longrightarrow \langle F(z), z-w \rangle\geq0,\quad \forall w,\ z\in H.
\end{equation*}
\\ \emph{(iv)} quasimonotone if
\begin{equation*}
  \langle F(w), z-w \rangle>0\Longrightarrow \langle F(z), z-w \rangle\geq0,\quad \forall w,\ z\in H.
\end{equation*}
\\ \emph{(v)} uniformly continuous if for any $\epsilon>0$, there exists $\delta>0$ such that
\begin{equation*}
  \|w-z\|< \delta\Longrightarrow \|F(w)-F(z)\|<\epsilon,\quad \forall w,\ z\in H.
\end{equation*}
\\ \emph{(vi)} $L$-Lipschitz continuous with $L>0$ if
\begin{equation*}
  \|F(w)-F(z)\|\leq L\|w-z\|,\quad \forall w,\ z\in H.
\end{equation*}
\end{definition}
Obviously, (i)$\Rightarrow$(iii)$\Rightarrow$(iv), (ii)$\Rightarrow$(iii)$\Rightarrow$(iv) and (vi)$\Rightarrow$(v). However, the converse do not necessarily hold true.
\begin{lemma}\cite{vanderbei1991}\label{1991}
Let $C$ be a convex subset of $H$. Then, $F: C\to H$ is uniformly continuous if and only if, for any $\epsilon > 0$, there exists a positive constant $M < +\infty$ such that for all $w, z\in C$,
\[
\|F(z) - F(w)\| \leq M\|z - w\| + \epsilon.
\]
\end{lemma}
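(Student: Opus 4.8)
The plan is to prove the two implications separately, since the content lies almost entirely in the forward direction, where convexity of $C$ is the essential ingredient.

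For the reverse direction, I would start from the hypothesis that for every $\epsilon>0$ there is a finite constant $M$ with $\|F(z)-F(w)\|\le M\|z-w\|+\epsilon$ for all $w,z\in C$, and deduce uniform continuity directly. Fixing $\epsilon>0$, I would apply the hypothesis with $\epsilon/2$ in place of $\epsilon$ to obtain a constant $M$, and then set $\delta=\epsilon/(2M)$. Whenever $\|w-z\|<\delta$ this gives $\|F(z)-F(w)\|\le M\|z-w\|+\epsilon/2<\epsilon$, which is exactly the definition in Definition~(v). No structural assumption on $C$ is needed here.

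For the forward direction, assume $F$ is uniformly continuous and fix $\epsilon>0$. Uniform continuity supplies a $\delta>0$ such that $\|w-z\|<\delta$ forces $\|F(w)-F(z)\|<\epsilon$. The genuine difficulty is to control $\|F(z)-F(w)\|$ when $\|z-w\|\ge\delta$, since uniform continuity gives no a priori bound across large distances. This is where I would use convexity: for fixed $w,z\in C$ the whole segment $[w,z]$ lies in $C$, so the values of $F$ can be chained along it. Choosing $N$ to be the smallest integer with $\|z-w\|/N<\delta$ and setting $x_i=w+\tfrac{i}{N}(z-w)$ for $i=0,\dots,N$, each consecutive pair satisfies $\|x_{i+1}-x_i\|=\|z-w\|/N<\delta$, so $\|F(x_{i+1})-F(x_i)\|<\epsilon$.

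The conclusion then follows from a telescoping triangle inequality, $\|F(z)-F(w)\|\le\sum_{i=0}^{N-1}\|F(x_{i+1})-F(x_i)\|<N\epsilon$, together with the estimate $N\le\|z-w\|/\delta+1$, which yields $\|F(z)-F(w)\|<(\epsilon/\delta)\|z-w\|+\epsilon$. Hence $M=\epsilon/\delta$ works uniformly for all $w,z\in C$; the case $\|z-w\|<\delta$ is covered by $N=1$ and $w=z$ is trivial. I expect the main obstacle to be precisely this chaining step: it is the point at which convexity is indispensable, and one must choose $N$ so that every sub-step is strictly shorter than $\delta$ while keeping $N$ comparable to $\|z-w\|/\delta$, so that the resulting constant $M$ remains finite.
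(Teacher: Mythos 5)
Your proof is correct: both directions are sound, and the chaining argument along the segment $[w,z]$ (valid precisely because $C$ is convex), with $N\le\|z-w\|/\delta+1$ subdivisions giving $M=\epsilon/\delta$, is exactly the standard argument behind this result. The paper itself offers no proof---it simply cites \cite{vanderbei1991}---and your argument is essentially the one in that reference, so there is nothing to reconcile.
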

Given a point $u$, its metric projection onto the set $C$, denoted by $P_C(u)$, and it is defined as:
\begin{equation*}
  P_C(u):=\mathop{\arg\min}\{\|u-v\|:v\in C\}.
\end{equation*}
\begin{lemma}\label{lm:2.1}\cite{bau}
The following results hold for any $z\in H$ and any $u\in C$:
\\ \emph{(i)} $\langle z-P_C(z),\ u-P_C(z)\rangle\leq0$.
\\ \emph{(ii)} $\|z-P_C(z)\|^2\leq \langle z-P_C(z),\ z-u \rangle$.
\end{lemma}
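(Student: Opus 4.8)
The plan is to establish both inequalities directly from the defining variational property of the metric projection, namely that $P_C(z)$ is the unique minimizer of the map $v\mapsto\|z-v\|$ over the convex set $C$. Throughout I would write $p:=P_C(z)$ to lighten the notation. The two parts are closely linked: part (i) is the genuine content (the obtuse-angle characterization), while part (ii) will then fall out as a short algebraic consequence.

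For part (i), I would exploit the convexity of $C$ to probe the minimality of $p$ along line segments. Given any $u\in C$ and any $t\in(0,1]$, convexity guarantees that the test point $(1-t)p+tu=p+t(u-p)$ still lies in $C$, so the minimizing property of $p$ yields $\|z-p\|^2\le\|z-p-t(u-p)\|^2$. Expanding the right-hand side by the Hilbert-space identity gives $0\le -2t\langle z-p,\,u-p\rangle+t^2\|u-p\|^2$. Dividing through by $t>0$ and then letting $t\to 0^+$ kills the quadratic term and leaves $\langle z-p,\,u-p\rangle\le 0$, which is precisely (i).

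For part (ii), rather than re-running the optimization argument I would derive it algebraically from (i). Decomposing $z-u=(z-p)+(p-u)$ and pairing with $z-p$ gives $\langle z-p,\,z-u\rangle=\|z-p\|^2+\langle z-p,\,p-u\rangle$. The second term equals $-\langle z-p,\,u-p\rangle$, which is nonnegative by part (i); hence the whole expression is bounded below by $\|z-p\|^2$, establishing (ii).

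The only step requiring care is the limiting passage in (i): one must check that the inequality $0\le -2t\langle z-p,\,u-p\rangle+t^2\|u-p\|^2$ is valid for all small $t>0$, which is guaranteed here precisely because the test point remains in $C$ for every $t\in[0,1]$, so dividing by $t$ and sending $t\to 0^+$ is legitimate. Everything else is routine inner-product manipulation, and part (ii) is a one-line corollary of (i). As an alternative, both parts can be read off simultaneously from the standard obtuse-angle characterization of projections in a Hilbert space, but the two-stage route above is the most self-contained.
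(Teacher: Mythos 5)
Your proof is correct: the paper itself gives no proof of this lemma, citing it directly from the reference (Bauschke--Combettes), and your argument is exactly the standard one found there --- probing the minimality of $P_C(z)$ along the segment $(1-t)P_C(z)+tu \in C$, dividing by $t$, and letting $t\to 0^+$ to get (i), then obtaining (ii) from the decomposition $z-u=(z-P_C(z))+(P_C(z)-u)$. Both steps are carried out carefully (in particular the justification for passing to the limit), so there is nothing to add.
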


\begin{lemma}\label{mono}\cite{denisov2015}
Let $z \in H$ and let $\alpha,\ \beta > 0$ with $\alpha \geq \beta$. Define $z(\alpha)=P_C(z-\alpha F(z))$ and $e(z,\alpha)=z-z(\alpha)$. Then the following inequalities are satisfied:
\begin{equation*}
\frac{1}{\alpha} \left\|e(z,\alpha)\right\| \leq \frac{1}{\beta} \left\|e(z,\beta)\right\|,
\end{equation*}
and
\begin{equation*}
\left\|e(z,\beta)\right\| \leq \left\|e(z,\alpha)\right\|.
\end{equation*}
\end{lemma}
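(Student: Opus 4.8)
The plan is to exploit the variational characterization of the metric projection supplied by Lemma~\ref{lm:2.1}(i). Writing $a := e(z,\alpha)$, $b := e(z,\beta)$ and $v := F(z)$, and observing that $z(\beta)-z(\alpha)=a-b$ while $z-z(\alpha)=a$ and $z-z(\beta)=b$, I would first apply Lemma~\ref{lm:2.1}(i) to $z(\alpha)=P_C(z-\alpha v)$ with the admissible test point $u=z(\beta)\in C$, and then to $z(\beta)=P_C(z-\beta v)$ with $u=z(\alpha)\in C$. After substitution, the two projection inequalities collapse to the two scalar relations
\begin{equation*}
\langle a,\, a-b\rangle \le \alpha\,\langle v,\, a-b\rangle \qquad\text{and}\qquad \langle b,\, a-b\rangle \ge \beta\,\langle v,\, a-b\rangle,
\end{equation*}
which serve as the workhorses for both assertions.

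The decisive step is to eliminate the unknown operator term $\langle v, a-b\rangle$. Multiplying the first relation by $\beta>0$ and the second by $\alpha>0$ and chaining them through the common middle term $\alpha\beta\langle v,a-b\rangle$ yields $\beta\langle a,a-b\rangle \le \alpha\langle b,a-b\rangle$, that is, $\langle \beta a-\alpha b,\, a-b\rangle \le 0$. Expanding this inner product and applying the Cauchy--Schwarz inequality $\langle a,b\rangle\le\|a\|\,\|b\|$ gives, with $x:=\|a\|$ and $y:=\|b\|$, the factored inequality $(x-y)(\beta x-\alpha y)\le 0$. In parallel, subtracting the second scalar relation from the first produces $\|a-b\|^2\le(\alpha-\beta)\langle v,a-b\rangle$, so that $\langle v,a-b\rangle\ge0$ (the case $\alpha=\beta$ being trivial, since then $a=b$); feeding this back into the second relation gives $\|b\|^2\le\langle a,b\rangle\le\|a\|\,\|b\|$, whence $\|b\|\le\|a\|$, which is exactly the second asserted inequality.

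Finally I would combine the two outcomes. Since $x-y=\|a\|-\|b\|\ge0$, the factored inequality $(x-y)(\beta x-\alpha y)\le0$ forces $\beta\|a\|-\alpha\|b\|\le0$ whenever $x>y$, which is precisely $\tfrac{1}{\alpha}\|e(z,\alpha)\|\le\tfrac{1}{\beta}\|e(z,\beta)\|$; in the borderline case $x=y$ the claim $\beta x\le\alpha x$ is immediate from $\alpha\ge\beta>0$ (and this subsumes the degenerate case $y=0$). I expect the main obstacle to be purely algebraic rather than conceptual: one must hit upon the \emph{asymmetric} scaling (by $\beta$ and by $\alpha$) that cancels the term $\langle F(z),a-b\rangle$, and then read off the sign of the factor $\beta\|a\|-\alpha\|b\|$ from the \emph{already established} ordering $\|b\|\le\|a\|$, rather than attempting to extract both inequalities from the single factored relation. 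Careful bookkeeping of the degenerate cases then completes the argument.
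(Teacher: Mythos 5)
Your proof is correct, and every step checks out: the two scalar relations do follow from Lemma~\ref{lm:2.1}(i) applied to the projections of $z-\alpha F(z)$ and $z-\beta F(z)$ with the swapped test points $z(\beta)$ and $z(\alpha)$; the asymmetric scaling by $\beta$ and $\alpha$ legitimately eliminates the term $\langle F(z),a-b\rangle$; the expansion of $\langle \beta a-\alpha b,\,a-b\rangle\le 0$ plus Cauchy--Schwarz does yield $(\|a\|-\|b\|)(\beta\|a\|-\alpha\|b\|)\le 0$; the sign $\langle F(z),a-b\rangle\ge 0$ (from adding the two relations) correctly gives $\|b\|\le\|a\|$; and your case analysis $\|a\|>\|b\|$ versus $\|a\|=\|b\|$ exhausts all possibilities, including the degenerate one. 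There is, however, nothing in the paper to compare this against: the authors state Lemma~\ref{mono} as a known result imported from \cite{denisov2015} and give no proof at all, so your argument is not a variant of theirs but a replacement for an omitted one. What you have reconstructed is essentially the classical argument for the monotonicity properties of the projected-gradient residual (the Gafni--Bertsekas-type proof that the cited reference itself relies on): its value here is that it is fully self-contained, using only Lemma~\ref{lm:2.1}(i) and Cauchy--Schwarz, with no appeal to the external source, and it makes explicit the one non-obvious idea --- that the first inequality cannot be extracted from the factored relation alone but needs the ordering $\|e(z,\beta)\|\le\|e(z,\alpha)\|$ established first to fix the sign of the factor $\|a\|-\|b\|$.
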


\begin{lemma}\cite{polyak1987}\label{lim}
Let $\{u_n\}$, $\{\beta_n\}$ and $\{v_n\}$ be sequences of nonnegative real numbers such that
\begin{equation*}
 u_{n+1}=(1 + \beta_n)u_n + v_n, \ \forall n \geq 1,
\end{equation*}
where $\sum_{n=0}^\infty \beta_n<+\infty$ and $\sum_{n=0}^\infty v_n<+\infty$. Then, $\lim\limits_{n\to\infty} u_n$ exists.
\end{lemma}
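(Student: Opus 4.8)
The plan is to renormalize $\{u_n\}$ by dividing out the accumulated multiplicative growth factor, thereby reducing the affine recursion to a manifestly monotone one, and then to exploit the two summability hypotheses to obtain boundedness. Concretely, I would set $P_0 = 1$ and $P_n = \prod_{k=1}^{n}(1+\beta_k)$ for $n \geq 1$. The first step is to observe that because $\sum_n \beta_n < +\infty$ and each $\beta_n \geq 0$, the elementary inequality $1+x \leq e^x$ gives $P_n \leq \exp\!\big(\sum_{k=1}^{n}\beta_k\big) \leq \exp\!\big(\sum_{k=0}^{\infty}\beta_k\big) < +\infty$. Since $\{P_n\}$ is nondecreasing and bounded above, it converges to some finite limit $P \in [1,+\infty)$.

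Next I would introduce the normalized sequence $a_n := u_n/P_{n-1}$. Dividing the recursion $u_{n+1} = (1+\beta_n)u_n + v_n$ by $P_n = (1+\beta_n)P_{n-1}$ yields
\begin{equation*}
a_{n+1} = a_n + \frac{v_n}{P_n},
\end{equation*}
so that $\{a_n\}$ is nondecreasing, all terms being nonnegative. Telescoping gives $a_{n+1} = a_1 + \sum_{k=1}^{n} v_k/P_k$, and since $P_k \geq 1$ for every $k$ we have $v_k/P_k \leq v_k$; hence $a_{n+1} \leq a_1 + \sum_{k=1}^{\infty} v_k < +\infty$ by the summability of $\{v_n\}$. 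Thus $\{a_n\}$ is nondecreasing and bounded above, and therefore converges to some limit $a^\ast$.

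Finally, writing $u_n = a_n P_{n-1}$ and passing to the limit, the product of the two convergent sequences yields $\lim_{n\to\infty} u_n = a^\ast P$, which establishes the claim. The only genuinely delicate point is the convergence, and in particular the finiteness, of the growth factor $P_n$, which is precisely where the hypothesis $\sum_n \beta_n < +\infty$ is used; everything else is routine bookkeeping with the telescoping sum and the observation $P_n \geq 1$. An alternative route that sidesteps infinite products would be to work additively with $\log P_n = \sum_{k} \log(1+\beta_k)$ together with the comparison $\log(1+\beta_k)\leq \beta_k$, but the multiplicative normalization above appears to be the cleanest.
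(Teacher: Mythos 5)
Your proof is correct, and it is essentially the classical argument behind this lemma (the paper itself gives no proof, deferring entirely to the cited reference of Polyak, where the same multiplicative normalization $P_n=\prod_{k}(1+\beta_k)$, the bound $1+x\leq e^x$, and the telescoping of $a_{n+1}=a_n+v_n/P_n$ appear). Nothing is missing: the finiteness of $P_n$, the monotonicity and boundedness of $a_n$, and the passage back to $u_n=a_nP_{n-1}$ together give exactly the claimed existence of $\lim_{n\to\infty}u_n$.
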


\section{Proposed Method}\label{sec3}
In this section, we first present the assumptions throughout the paper as follows:
\begin{description}
 \item[(A1)]$S_D\neq \emptyset$.
 \item[(A2)]$F: H\to H$ is quasimonotone operator.
 \item[(A3)]$F$ is uniformly continuous operator on $H$.
 \item[(A4)]The operator $F$ satisfies whenever $\{z_n\}\subset H$ and $z_n\rightharpoonup z$, there is $\|F(z)\|\leq \liminf\limits_{n\to\infty}\|F(z_n)\|$.
\end{description}
The following is the self-adaptive Tseng's extragradient algorithm.
\renewcommand{\thealgorithm}{3.1}
\begin{algorithm}[H]
\caption{}
\label{AL1}
\begin{algorithmic}
\STATE{\emph{Step 0:} Let $\epsilon\geq0$, $\mu\in(0,1)$ and $\{\xi_n\}\subset [0,\infty)$ such that $\sum\limits_{n=1}^{\infty}\xi_n<\infty$. Choose arbitrarily initial point $z_1\in H$ and $\lambda_1>0$.}
\STATE{\emph{Step 1:} Compute
\begin{equation*}
w_n=P_C(z_n-\lambda_nF(z_n)).
\end{equation*}
If $\|z_n-w_n\|\leq\lambda_n\epsilon$, then terminate the process ($z_n$ is an approximate solution). Otherwise,
}
\STATE{\emph{Step 2:} Compute
\begin{equation*}
  z_{n+1}=w_n+\lambda_n(F(z_n)-F(w_n)).
\end{equation*}
\STATE{\emph{Step 3:} Update
\begin{equation*}
\lambda_{n+1}=
\begin{cases}
\min\{\frac{\mu\|z_n-w_n\|}{\|F(z_n)-F(w_n)\|},\ \lambda_n+\xi_n\},\ \ \text{if}\ F(z_n)-F(w_n)\neq 0,\\
\lambda_n+\xi_n,\qquad\qquad\qquad\qquad\qquad\ \text{otherwise}.
\end{cases}
\end{equation*}
}
}
\STATE{\emph{Step 4:}\ Set $n:= n + 1$ to update $n$, and then return to
\emph{Step 1}.}
\end{algorithmic}
\end{algorithm}

\begin{remark}
Under the assumption that $F$ satisfies the Lipschitz condition, the convergence analysis of Algorithm \ref{AL1} follows the same framework as established in \cite{yang3}. This work focuses on the case where $F$ is uniformly continuous.

\end{remark}

\section{Convergence Analysis}\label{sec4}
In this section, we assume $\epsilon=0$ and $z_n\neq w_n$ for all $n$, which implies the algorithm does not terminate in finite steps. Subsequently, we focus on establishing the convergence properties of the infinite sequence $\{z_n\}$ obtained by Algorithm \ref{AL1}.


\begin{lemma}\label{lll}
The sequence $\{\lambda_n\}$ is generated by Algorithm \ref{AL1}. Then,\\
\emph{(i)} $\{\lambda_n\}$ has upper bound $\lambda_1+\Xi$, where $\Xi=\Sigma_{n=1}^\infty \xi_n$;\\
\emph{(ii)} The limit of $\{\lambda_n\}$ exists, that is, $\lim\limits_{n\to\infty}\lambda_n=\lambda\geq0$;\\
\emph{(iii)} The series $\sum_{n=0}^{\infty}(\lambda_{n+1}-\lambda_n)_+$ and $\sum_{n=0}^{\infty}(\lambda_{n+1}-\lambda_n)_-$ are convergent, where $(\lambda_{n+1}-\lambda_n)_+=\max\{0, \lambda_{n+1}-\lambda_n\}$ and $(\lambda_{n+1}-\lambda_n)_-=\max\{0, -(\lambda_{n+1}-\lambda_n)\}$.
\end{lemma}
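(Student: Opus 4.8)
The plan is to reduce the whole lemma to one scalar inequality hidden in the Step~3 update, namely
$\lambda_{n+1}\le\lambda_n+\xi_n$, which holds in both branches: in the first branch $\lambda_{n+1}$ is a minimum whose second argument is exactly $\lambda_n+\xi_n$, and in the second branch equality holds. Together with $\xi_n\ge0$ and $\sum_{n=1}^\infty\xi_n=\Xi<\infty$, this single observation drives all three parts.

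For part (i), I would iterate $\lambda_{n+1}\le\lambda_n+\xi_n$ and telescope the increments in a one-line induction, obtaining $\lambda_{n+1}\le\lambda_1+\sum_{k=1}^n\xi_k\le\lambda_1+\Xi$. Since $\lambda_1>0$ and the assumption $z_n\neq w_n$ of Section~\ref{sec4} forces the ratio $\frac{\mu\|z_n-w_n\|}{\|F(z_n)-F(w_n)\|}$ to be strictly positive in the first branch (and $\lambda_n+\xi_n>0$ in the second), every $\lambda_n$ stays in $(0,\lambda_1+\Xi]$, so the sequence is positive and bounded above.

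For part (ii), rather than wrestle with the inequality directly, I would introduce the auxiliary sequence $a_n:=\lambda_n+\sigma_n$ with $\sigma_n:=\sum_{k=n}^\infty\xi_k$, so that $\sigma_n-\sigma_{n+1}=\xi_n$ and $\sigma_n\to0$. Then $a_{n+1}=\lambda_{n+1}+\sigma_{n+1}\le\lambda_n+\xi_n+\sigma_{n+1}=\lambda_n+\sigma_n=a_n$, so $\{a_n\}$ is non-increasing and bounded below by $0$, hence convergent; since $\sigma_n\to0$, it follows that $\lim_n\lambda_n=\lim_n a_n=:\lambda\ge0$ exists. (One could instead coerce the recursion into the equality form required by Lemma~\ref{lim}, but the monotone auxiliary sequence is cleaner and self-contained.)

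For part (iii), the positive part is immediate: $\lambda_{n+1}-\lambda_n\le\xi_n$ gives $(\lambda_{n+1}-\lambda_n)_+\le\xi_n$, whence $\sum_n(\lambda_{n+1}-\lambda_n)_+\le\Xi<\infty$. For the negative part I would use $\lambda_{n+1}-\lambda_n=(\lambda_{n+1}-\lambda_n)_+-(\lambda_{n+1}-\lambda_n)_-$ and telescope: summing to $N$ yields $\sum_{n=1}^N(\lambda_{n+1}-\lambda_n)_-=\sum_{n=1}^N(\lambda_{n+1}-\lambda_n)_++\lambda_1-\lambda_{N+1}$, whose right-hand side is bounded above because the positive-part series converges and $\lambda_{N+1}$ is positive and bounded by part (i); a nonnegative series with bounded partial sums converges. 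There is no deep obstacle here: the only genuinely non-mechanical move is isolating the inequality $\lambda_{n+1}\le\lambda_n+\xi_n$ and handling part (ii) via the auxiliary monotone sequence, after which everything else is bookkeeping. The one point demanding care is that the negative-part series is not controlled on its own but only indirectly, through the already-established bound on the positive part and the boundedness of $\lambda_{N+1}$.
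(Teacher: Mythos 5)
Your proof is correct and complete. Note, however, that the paper itself does not prove this lemma at all: its entire ``proof'' is a citation to Lemma 3.1 of the Liu--Yang reference \cite{yang3}, so your write-up is genuinely self-contained where the paper is not. Compared with the cited standard argument, your route through part (ii) is different: Liu--Yang first establish (iii) --- the positive-part series by comparison with $\sum\xi_n$, and the negative-part series by contradiction (if it diverged, the telescoping identity $\lambda_{N+1}=\lambda_1+\sum_{n=1}^{N}(\lambda_{n+1}-\lambda_n)_+-\sum_{n=1}^{N}(\lambda_{n+1}-\lambda_n)_-$ would force $\lambda_{N+1}\to-\infty$, contradicting $\lambda_n>0$) --- and only then deduce (ii) from that same identity, since both partial sums converge. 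You instead prove (ii) directly and independently via the monotone auxiliary sequence $a_n=\lambda_n+\sum_{k\ge n}\xi_k$, and you obtain the negative-part series from bounded partial sums rather than by contradiction; the two arguments are logically equivalent bookkeeping, but yours decouples (ii) from (iii), which makes each part usable on its own and avoids having to coerce the recursion into the equality form demanded by the paper's Lemma \ref{lim}. One small cleanup: in Step 3's first branch the condition $F(z_n)\neq F(w_n)$ already forces $z_n\neq w_n$, so the ratio $\mu\|z_n-w_n\|/\|F(z_n)-F(w_n)\|$ is automatically positive and you do not need to invoke the Section \ref{sec4} non-termination assumption for positivity; also, the summation index $n=0$ in the lemma statement is a typo of the paper (the sequence starts at $\lambda_1$), and your indexing from $n=1$ is the right reading.
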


\begin{proof}
A detailed proof can be obtained from Lemma 3.1 in \cite{yang3}.
\end{proof}

\begin{lemma}\label{22}
Assume that conditions (A1) and (A2) hold. Then, the sequence $\{z_n\}$ generated by Algorithm \ref{AL1} satisfies \\
\emph{(i)} $\lim_{n\to\infty} \|z_n - x^\ell\|$ exists for $x^\ell\in S_D$;\\
\emph{(ii)} $\lim_{n\to\infty}\|z_{n}-w_n\|= 0$ and $\lim_{n\to\infty}\|z_{n+1}-z_n\|=0$.
\end{lemma}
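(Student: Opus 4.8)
The plan is to derive a single Fej\'er-type estimate
\[
\|z_{n+1}-x^\ell\|^2 \le \|z_n-x^\ell\|^2-\Bigl(1-\tfrac{\mu^2\lambda_n^2}{\lambda_{n+1}^2}\Bigr)\|z_n-w_n\|^2,
\]
and then read off both assertions from it. I would first fix $x^\ell\in S_D$, which exists by (A1), and exploit the defining property of a \emph{dual} solution: taking the test point $v=w_n\in C$ yields $\langle F(w_n),\,w_n-x^\ell\rangle\ge 0$. This one inequality is the crux that makes quasimonotonicity (A2) enough — it replaces any monotonicity-type comparison of $F$ at two points, so $F$ itself never needs to be (pseudo)monotone in the estimate; all the work is carried by the dual inequality evaluated at the feasible iterate $w_n$.

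Next I would expand $\|z_{n+1}-x^\ell\|^2$ through the correction step $z_{n+1}=w_n+\lambda_n(F(z_n)-F(w_n))$ together with $\|w_n-x^\ell\|^2=\|z_n-x^\ell\|^2+\|w_n-z_n\|^2+2\langle w_n-z_n,\,z_n-x^\ell\rangle$. The two mixed terms are disposed of in turn: the $F(w_n)$ part is nonpositive by the dual inequality above, while the $F(z_n)$ part is absorbed using the projection characterization of $w_n=P_C(z_n-\lambda_nF(z_n))$ from Lemma \ref{lm:2.1}(i) with $u=x^\ell$. Keeping careful track of signs, the cross contributions collapse to exactly $-2\|z_n-w_n\|^2$, leaving $\|z_{n+1}-x^\ell\|^2\le\|z_n-x^\ell\|^2-\|z_n-w_n\|^2+\lambda_n^2\|F(z_n)-F(w_n)\|^2$. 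The stepsize rule of Step 3 then gives $\|F(z_n)-F(w_n)\|\le(\mu/\lambda_{n+1})\|z_n-w_n\|$ (and trivially so when $F(z_n)=F(w_n)$), converting the last term into $\mu^2(\lambda_n/\lambda_{n+1})^2\|z_n-w_n\|^2$ and producing the displayed inequality.

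To close (i), I would invoke Lemma \ref{lll}: since $\lambda_n\to\lambda$ and $\lambda_{n+1}-\lambda_n\to0$, the ratio $\lambda_n/\lambda_{n+1}\to1$, so the coefficient $1-\mu^2\lambda_n^2/\lambda_{n+1}^2\to1-\mu^2>0$ and is bounded below by some $\delta>0$ for all $n\ge N$. Then $\{\|z_n-x^\ell\|^2\}_{n\ge N}$ is nonincreasing and bounded below, hence convergent, which is (i); in particular $\{z_n\}$ is bounded. For (ii), telescoping the displayed inequality from $N$ gives $\delta\sum_{n\ge N}\|z_n-w_n\|^2\le\|z_N-x^\ell\|^2<\infty$, so $\|z_n-w_n\|\to0$; and then $\|z_{n+1}-z_n\|\le\|w_n-z_n\|+\lambda_n\|F(z_n)-F(w_n)\|\le\bigl(1+\mu\lambda_n/\lambda_{n+1}\bigr)\|z_n-w_n\|\to0$.

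The step I expect to be the main obstacle is guaranteeing that the coefficient $1-\mu^2\lambda_n^2/\lambda_{n+1}^2$ is eventually bounded away from zero (indeed positive), since the entire Fej\'er argument — and hence both conclusions — fails otherwise. This reduces to the limit $\lambda_n/\lambda_{n+1}\to1$, which I would extract from Lemma \ref{lll}; the delicate feature is that the non-monotone update $\lambda_{n+1}=\min\{\cdot,\ \lambda_n+\xi_n\}$ must still yield a convergent stepsize whose consecutive ratios tend to $1$. A secondary point needing care is the sign bookkeeping that makes the cross terms cancel to precisely $-2\|z_n-w_n\|^2$; a clean cancellation keeps the recursion free of any expansive factor $(1+\beta_n)\|z_n-x^\ell\|^2$, so that summability of $\|z_n-w_n\|^2$ follows by plain telescoping rather than through Lemma \ref{lim}.
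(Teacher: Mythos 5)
Your derivation of the basic estimate
\[
\|z_{n+1}-x^\ell\|^2 \le \|z_n-x^\ell\|^2-\|z_n-w_n\|^2+\lambda_n^2\|F(z_n)-F(w_n)\|^2
\]
is correct and coincides with the paper's inequality (\ref{3.31}): the dual inequality $\langle F(w_n),w_n-x^\ell\rangle\ge 0$ together with the projection characterization is exactly how the paper avoids needing (pseudo)monotonicity of $F$. However, there is a genuine gap in how you close the argument. You claim that Lemma \ref{lll} yields $\lambda_n/\lambda_{n+1}\to 1$, via ``$\lambda_n\to\lambda$ and $\lambda_{n+1}-\lambda_n\to 0$.'' This inference is false when $\lambda=0$, and Lemma \ref{lll} only guarantees $\lambda\ge 0$: for two sequences tending to zero, a vanishing difference does not control the ratio (e.g.\ $\lambda_n=2^{-n}$ gives $\lambda_{n+1}-\lambda_n\to 0$ but $\lambda_n/\lambda_{n+1}\equiv 2$, so your coefficient $1-\mu^2\lambda_n^2/\lambda_{n+1}^2$ can be negative for every $n$ once $\mu>1/2$). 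Moreover, $\lambda_n\to 0$ is precisely the case that can occur here: for Lipschitz $F$ the stepsize is bounded below by roughly $\min\{\lambda_1,\mu/L\}$, but for merely uniformly continuous $F$ the quotient $\mu\|z_n-w_n\|/\|F(z_n)-F(w_n)\|$ can collapse, and this case is the entire point of the paper (its Case A, $\lambda>0$, is simply delegated to the earlier Lipschitz-type analysis in \cite{yang3}).

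The paper's Case B shows what is needed to repair this. When $\lambda_n\to 0$, one cannot keep a clean Fej\'er inequality; instead, uniform continuity is invoked through Lemma \ref{1991} (with $\epsilon=\tfrac12$) to get $\|F(z_n)-F(w_n)\|\le M\|z_n-w_n\|+\tfrac12$, and for $n$ large enough that $\lambda_{n+1}\le\lambda_n<\mu/(2M)$, the decreasing-step branch of the update forces $\|F(z_n)-F(w_n)\|\le 1$. This converts the troublesome term $\lambda_n^2\|F(z_n)-F(w_n)\|^2$ into $\lambda_{n+1}^2\|F(z_n)-F(w_n)\|^2$ plus a perturbation bounded by $\frac{\mu}{M}(\lambda_{n+1}-\lambda_n)_-$, which is summable by Lemma \ref{lll}(iii); the quasi-Fej\'er inequality
\[
\|z_{n+1}-x^\ell\|^2\le\|z_n-x^\ell\|^2-(1-\mu^2)\|z_n-w_n\|^2+\tfrac{\mu}{M}(\lambda_{n+1}-\lambda_n)_-
\]
then gives (i) via Lemma \ref{lim} and (ii) by telescoping. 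Note also that your final step for (ii), bounding $\|z_{n+1}-z_n\|\le(1+\mu\lambda_n/\lambda_{n+1})\|z_n-w_n\|$, suffers from the same unbounded-ratio problem; the paper instead bounds $\lambda_n\le\lambda_1+\Xi$ and uses uniform continuity of $F$ to conclude $\|F(z_n)-F(w_n)\|\to 0$. So your proposal is valid only in the case $\lambda>0$ and misses the case that constitutes the paper's actual contribution.
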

\begin{proof}
Let $x^\ell\in S_D$. Then, from $z_{n+1}=w_n-\lambda_n(F(w_n)-F(z_n))$, we have
\begin{align}\label{3.1}
  \|z_{n+1}-x^\ell\|^2=&\|w_n-\lambda_n(F(w_n)-F(z_n))-x^\ell\|^2\notag\\
  =&\|w_n-x^\ell\|^2+\lambda_n^2\|F(w_n)-F(z_n)\|^2-2\lambda_n\langle F(w_n)-F(z_n), w_n-x^\ell \rangle\notag\\
  =&\|z_n-x^\ell\|^2+\|w_n-z_n\|^2+2\langle w_n-z_n, z_n-x^\ell\rangle+\lambda_n^2\|F(w_n)-F(z_n)\|^2\notag\\
  &-2\lambda_n\langle F(w_n)-F(z_n), w_n-x^\ell\rangle\notag\\
  =&\|z_n-x^\ell\|^2+\|w_n-z_n\|^2-2\langle w_n-z_n, w_n-z_n\rangle+2\langle w_n-z_n, w_n-x^\ell\rangle\notag\\
  &+\lambda_n^2\|F(w_n)-F(z_n)\|^2-2\lambda_n\langle F(w_n)-F(z_n), w_n-x^\ell\rangle.
\end{align}
According to $w_n = P_C(z_n-\lambda_nF(z_n))$ and $x^\ell\in S_D\subseteq C$, then from Lemma \ref{lm:2.1} (i), we obtain
\begin{equation*}
  \langle w_n-z_n+\lambda_nF(z_n),  w_n-x^\ell\rangle\leq 0,
\end{equation*}
that is,
\begin{equation}\label{3.2}
  \langle w_n-z_n,  w_n-x^\ell\rangle\leq -\lambda_n\langle F(z_n), w_n-x^\ell\rangle.
\end{equation}
Therefore,
\begin{align}\label{3.331}
  \|z_{n+1}-x^\ell\|^2\leq&\|z_n-x^\ell\|^2-\|w_n-z_n\|^2+\lambda_n^2\|F(w_n)-F(z_n)\|^2
  -2\lambda_n\langle F(w_n), w_n-x^\ell \rangle.
\end{align}
Since $x^\ell\in S_D$ and $w_n\in C$, we can obtain $\langle F(w_n), w_n-x^\ell\rangle\geq 0$. Thus, by (\ref{3.331}), we have
\begin{align}\label{3.31}
  \|z_{n+1}-x^\ell\|^2
  \leq&\|z_n-x^\ell\|^2-\|w_n-z_n\|^2+\lambda_n^2\|F(w_n)-F(z_n)\|^2.
\end{align}
For clarity, the proof is organized into two parts based on different cases.\\
\textbf{Case A}
Suppose $\lambda_n\to\lambda>0$ as $n\to\infty$. Then the subsequent proof is the same as that of Lemma 3.2 in \cite{yang3}. Consequently, (i) and (ii) hold.\\
\textbf{Case B}
Suppose $\lambda_n\to0$ as $n\to\infty$. According to Lemma \ref{1991}, by taking $\epsilon=\frac{1}{2}$, we obtain
\begin{equation}\label{dengjia}
  \|F(z_n)-F(w_n)\|\leq M\|z_n-w_n\|+\frac{1}{2}.
\end{equation}
Since $\lim\limits_{n\to\infty}\lambda_n=0$, there exists $N$ such that for all $n>N$, $\lambda_{n}<\frac{\mu}{2M}$.\\
If $\lambda_{n+1}<\lambda_n$, then, by the construction of the sequence $\{\lambda_{n+1}\}$, it follows that
\begin{equation}\label{eq4.5}
  \lambda_{n+1}=\frac{\mu\|z_n-w_n\|}{\|F(z_n)-F(w_n)\|}.
\end{equation}
Therefore, by (\ref{dengjia}), we have
\begin{equation}\label{eq222}
  \|F(z_n)-F(w_n)\|\leq \frac{M}{\mu}\lambda_{n+1}\|F(z_n)-F(w_n)\|+\frac{1}{2}.
\end{equation}
In addition, $\lambda_{n+1} < \lambda_n<\frac{\mu}{2M}$. Thus, by (\ref{eq222}) we have
\begin{equation*}
  \|F(z_n)-F(w_n)\|\leq 1.
\end{equation*}
Hence, from (\ref{3.31}), (\ref{eq4.5}) and $\mu\in(0,1)$, we obtain
\begin{align}\label{bb}
  \|z_{n+1}-x^\ell\|^2\leq&\|z_n-x^\ell\|^2-
  \|z_n-w_n\|^2+\lambda_n^2\|F(z_n)-F(w_n)\|^2\notag\\
  \leq&\|z_n-x^\ell\|^2-
  \frac{\lambda_{n+1}^2}{\mu^2}\|F(z_n)-F(w_n)\|^2
  +\lambda_n^2\|F(z_n)-F(w_n)\|^2\notag\\
  =&\|z_n-x^\ell\|^2-(\frac{1}{\mu^2}-1)\lambda_{n+1}^2\|F(z_n)-F(w_n)\|^2
  -\lambda_{n+1}^2\|F(z_n)-F(w_n)\|^2\notag\\
  &+\lambda_n^2\|F(z_n)-F(w_n)\|^2\notag\\
  \leq&\|z_n-x^\ell\|^2-(1-\mu^2)\frac{\lambda_{n+1}^2}{\mu^2}\|F(z_n)-F(w_n)\|^2
  +(\lambda_{n}^2-\lambda_{n+1}^2)\notag\\
  \leq&\|z_n-x^\ell\|^2-(1-\mu^2)\|z_n-w_n\|^2+(\lambda_{n+1}+\lambda_n)(\lambda_{n+1}-\lambda_n)_-\notag\\
  \leq&\|z_n-x^\ell\|^2-(1-\mu^2)\|z_n-w_n\|^2+\frac{\mu}{M}(\lambda_{n+1}-\lambda_n)_-.
\end{align}
If $\lambda_{n+1}\geq\lambda_n$, according to the definition of $\{\lambda_{n+1}\}$, we have
\begin{equation*}
  \lambda_{n+1}\leq\frac{\mu\|z_n-w_n\|}{\|F(z_n)-F(w_n)\|}.
\end{equation*}
Then from (\ref{3.31}), we can obtain
\begin{align}\label{aa}
  \|z_{n+1}-x^\ell\|^2\leq&\|z_n-x^\ell\|^2-\|w_n-z_n\|^2
  +\lambda_n^2\|F(w_n)-F(z_n)\|^2\notag\\
  \leq&\|z_n-x^\ell\|^2-\|w_n-z_n\|^2
  +\frac{\mu^2\lambda_n^2}{\lambda_{n+1}^2}\|w_n-z_n\|^2\notag\\
  =&\|z_n-x^\ell\|^2-
  (1-\frac{\mu^2\lambda_n^2}{\lambda_{n+1}^2})\|w_n-z_n\|^2\notag\\
  \leq&\|z_n-x^\ell\|^2-
  (1-\frac{\mu^2\lambda_n^2}{\lambda_n^2})\|w_n-z_n\|^2\notag\\
  =&\|z_n-x^\ell\|^2-(1-\mu^2)\|w_n-z_n\|^2\notag\\
  \leq&\|z_n-x^\ell\|^2-(1-\mu^2)\|w_n-z_n\|^2+\frac{\mu}{M}(\lambda_{n+1}-\lambda_n)_-.
\end{align}
Therefore, combining the above discussion, we conclude that for all $n>N$,
\begin{align}\label{48}
  \|z_{n+1}-x^\ell\|^2\leq&\|z_n-x^\ell\|^2-(1-\mu^2)\|z_n-w_n\|^2+\frac{\mu}{M}(\lambda_{n+1}-\lambda_n)_-\notag\\
  \leq&\|z_n-x^\ell\|^2+\frac{\mu}{M}(\lambda_{n+1}-\lambda_n)_-.
\end{align}
Then, applying Lemma \ref{lll} (iii) together with Lemma \ref{lim}, we can get $\lim\limits_{n\to\infty}\|z_n-x^\ell\|$ exists. Consequently, the sequence $\{z_n\}$ is bounded. This completes the proof of (i).\\
From (\ref{48}), we obtain for all $n>N$,
\begin{align*}
  (1-\mu^2)\|z_n-w_n\|^2\leq&\|z_n-x^\ell\|^2-\|z_{n+1}-x^\ell\|^2
  +\frac{\mu}{M}(\lambda_{n+1}-\lambda_n)_-.
\end{align*}
Then we can get
\begin{equation}\label{eq4.8}
  (1-\mu^2)\sum_{j=N}^{n}\|z_j-w_j\|^2\leq\|z_{N}-x^\ell\|^2-\|z_{n+1}-x^\ell\|^2
  +\sum_{j=N}^{n}(\lambda_{j+1}-\lambda_j)_-.
\end{equation}
Taking $n\to\infty$ in (\ref{eq4.8}) and by the fact that $\sum_{j=N}^{\infty}(\lambda_{j+1}-\lambda_j)_-$ is convergent, we have $\sum_{j=N}^{\infty}\|z_j-w_j\|^2<+\infty$. Therefore,
\begin{equation}\label{xy0}
  \lim\limits_{n\to\infty}\|z_n-w_n\|=0.
\end{equation}
Furthermore, by Lemma \ref{lll} (i), we derive
\begin{align}\label{xn1}
  \|z_{n+1}-z_n\|\leq&\|w_n+\lambda_n(F(z_n)-F(w_n))-z_n\|\notag\\
  \leq&\|w_n-z_n\|+\lambda_n\|F(z_n)-F(w_n)\|\notag\\
  \leq&\|w_n-z_n\|+(\lambda_1+\Xi)\|F(z_n)-F(w_n)\|.
\end{align}
Then from (\ref{xy0}) and the uniform continuity of $F$, we obtain $\lim\limits_{n\to\infty}\|F(z_n)-F(w_n)\|=0$. Thus, it follows from (\ref{xn1}) that
\begin{equation*}
  \lim\limits_{n\to\infty}\|z_{n+1}-z_n\|=0.
\end{equation*}
This completes the proof of (ii).
\end{proof}

\begin{lemma}\label{444}
Assume that $\lambda_n\to0$ as $n\to\infty$. Let $I := \{n \in \mathbb{N} : \lambda_{n+1} < \lambda_n\}$, and let $\{n_k\}$ for all $k\geq1$ represent all indicators in $I$. Then, the following properties hold for all $n_k$:\\
\emph{(i)} $\lim\limits_{k\to\infty}\frac{\|z_{n_k}-w_{n_k}\|}{\lambda_{n_k}}=0$;\\
\emph{(ii)} $\lim\limits_{k\to\infty}\frac{\|z_{{n_k}+1}-z_{n_k}\|}{\lambda_{n_k}}=0$.
\end{lemma}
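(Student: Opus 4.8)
The plan is to exploit the fact that the index set $I$ selects precisely those steps at which the self-adaptive rule strictly decreases the stepsize, because on such steps the update formula pins $\lambda_{n+1}$ down exactly. Indeed, for $n=n_k\in I$ we have $\lambda_{n_k+1}<\lambda_{n_k}\le \lambda_{n_k}+\xi_{n_k}$, so the minimum defining $\lambda_{n_k+1}$ cannot be attained by the branch $\lambda_{n_k}+\xi_{n_k}$; hence $F(z_{n_k})-F(w_{n_k})\neq 0$ and, as in \eqref{eq4.5},
\begin{equation*}
  \lambda_{n_k+1}=\frac{\mu\|z_{n_k}-w_{n_k}\|}{\|F(z_{n_k})-F(w_{n_k})\|},
  \qquad\text{i.e.}\qquad
  \|z_{n_k}-w_{n_k}\|=\frac{\lambda_{n_k+1}}{\mu}\,\|F(z_{n_k})-F(w_{n_k})\|.
\end{equation*}
This is the crux of the argument: it rewrites the ratio in (i), whose numerator and denominator both vanish, as a quantity I can control.

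For part (i), I would divide the identity above by $\lambda_{n_k}$ and use $\lambda_{n_k+1}<\lambda_{n_k}$ to obtain
\begin{equation*}
  \frac{\|z_{n_k}-w_{n_k}\|}{\lambda_{n_k}}
  =\frac{\lambda_{n_k+1}}{\mu\,\lambda_{n_k}}\,\|F(z_{n_k})-F(w_{n_k})\|
  \le \frac{1}{\mu}\,\|F(z_{n_k})-F(w_{n_k})\|.
\end{equation*}
Since Lemma \ref{22}(ii) already gives $\|z_n-w_n\|\to 0$, the uniform continuity of $F$ in (A3) (equivalently Lemma \ref{1991}) yields $\|F(z_n)-F(w_n)\|\to 0$, so the right-hand side tends to $0$ and (i) follows.

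For part (ii), I would start from the iteration $z_{n+1}=w_n+\lambda_n(F(z_n)-F(w_n))$ to write $z_{n+1}-z_n=(w_n-z_n)+\lambda_n(F(z_n)-F(w_n))$, apply the triangle inequality, and divide by $\lambda_{n_k}$:
\begin{equation*}
  \frac{\|z_{n_k+1}-z_{n_k}\|}{\lambda_{n_k}}
  \le \frac{\|z_{n_k}-w_{n_k}\|}{\lambda_{n_k}}+\|F(z_{n_k})-F(w_{n_k})\|.
\end{equation*}
The first term tends to $0$ by part (i) and the second by uniform continuity, giving the claim.

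The whole argument is short; the only genuine obstacle is the indeterminate $0/0$ form of the ratio in (i), since both $\|z_{n_k}-w_{n_k}\|\to0$ and $\lambda_{n_k}\to0$. Everything hinges on recognizing that restricting to $n_k\in I$ forces the first branch of the stepsize rule, which is exactly what turns $\lambda_{n_k+1}/\lambda_{n_k}$ into a bounded (in fact strictly less than $1$) factor and reduces both limits to the already-established decay $\|F(z_n)-F(w_n)\|\to0$.
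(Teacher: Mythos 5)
Your proposal is correct and follows essentially the same route as the paper's proof: both arguments observe that for $n_k\in I$ the stepsize rule forces $\lambda_{n_k+1}=\frac{\mu\|z_{n_k}-w_{n_k}\|}{\|F(z_{n_k})-F(w_{n_k})\|}$, deduce $\frac{\|z_{n_k}-w_{n_k}\|}{\lambda_{n_k}}\le\frac{1}{\mu}\|F(z_{n_k})-F(w_{n_k})\|\to 0$ via Lemma \ref{22}(ii) and uniform continuity, and then obtain (ii) from the triangle inequality applied to $z_{n_k+1}-z_{n_k}=(w_{n_k}-z_{n_k})+\lambda_{n_k}(F(z_{n_k})-F(w_{n_k}))$. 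Your justification that the branch $\lambda_{n_k}+\xi_{n_k}$ cannot be the one attained is a slightly more explicit version of the paper's appeal to the definition of the stepsize, but it is the same argument.
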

\begin{proof}
Since $\lim\limits_{n\to\infty}\lambda_n=0$, we get that $I := \{n \in \mathbb{N} : \lambda_{n+1} < \lambda_n\}$ is an infinite set. Then, for all $n_k\in I$, we have  $\lambda_{{n_k} + 1} < \lambda_{n_k}$. Therefore, according to the definition of step size, we can get
\begin{equation*}
  \lambda_{n_k + 1}=\frac{\mu\|z_{n_k}-w_{n_k}\|}{\|F(z_{n_k})-F(w_{n_k})\|}
  <\lambda_{n_k},
\end{equation*}
that is,
\begin{equation}
  \frac{\|z_{n_k}-w_{n_k}\|}{\lambda_{n_k}}
  \leq\frac{1}{\mu}\|F(z_{n_k})-F(w_{n_k})\|.
\end{equation}
By (\ref{xy0}), we have established that $\lim\limits_{k\to\infty}\|z_{n_k}-w_{n_k}\|=0$. Combining this result with the uniformly continuity of $F$, we immediately obtain
\begin{equation*}
  \lim\limits_{k\to\infty}\|F(z_{n_k})-F(w_{n_k})\|=0.
\end{equation*}
Consequently,
\begin{equation*}
   \frac{\|z_{n_k}-w_{n_k}\|}{\lambda_{n_k}}\to0,\ \text{as}\ k\to\infty.
\end{equation*}
Thus, the proof of (i) is complete.\\
In addition, from (\ref{xn1}), we obtain
\begin{equation*}
  \|z_{n_k+1}-z_{n_k}\|\leq\|z_{n_k}-w_{n_k}\|+\lambda_{n_k}\|F(z_{n_k})-F(w_{n_k})\|.
\end{equation*}
Therefore,
\begin{equation*}
  \frac{\|z_{n_k+1}-z_{n_k}\|}{\lambda_{n_k}}\leq\frac{\|z_{n_k}-w_{n_k}\|}{\lambda_{n_k}}
  +\|F(z_{n_k})-F(w_{n_k})\|\to0,\ \text{as}\ k\to\infty.
\end{equation*}
This completes the proof of (ii).
\end{proof}

\begin{lemma}\label{333}
Assume that the conditions (A1)-(A4) hold. The sequence $\{z_n\}$ is generated by Algorithm \ref{AL1}. Then there exists a weak cluster point $u^*$ of $\{z_n\}$ such that $u^* \in S_D$ or $F(u^*) = 0$.
\end{lemma}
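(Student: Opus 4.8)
The plan is to distil the whole argument into a single one-sided estimate along a suitable subsequence and then invoke quasimonotonicity together with (A4). Since $\lim_{n\to\infty}\|z_n-x^\ell\|$ exists for $x^\ell\in S_D$ by Lemma \ref{22}(i), the sequence $\{z_n\}$ is bounded, and so is $\{w_n\}$ because $\|z_n-w_n\|\to 0$ by Lemma \ref{22}(ii). By Lemma \ref{lll}(ii) the limit $\lambda=\lim_{n\to\infty}\lambda_n\geq 0$ exists. My goal is to exhibit a subsequence with $w_{n_k}\rightharpoonup u^*$ (whence $u^*\in C$ by weak closedness of $C$, and $z_{n_k}\rightharpoonup u^*$ as well) satisfying the key estimate
\[
\liminf_{k\to\infty}\langle F(w_{n_k}),\,v-w_{n_k}\rangle\geq 0,\qquad \forall v\in C.
\]

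To obtain this estimate I would start from the projection characterization. Applying Lemma \ref{lm:2.1}(i) to $w_n=P_C(z_n-\lambda_nF(z_n))$ with $u=v\in C$ gives
\[
\langle F(z_n),\,v-w_n\rangle\geq \frac{1}{\lambda_n}\langle z_n-w_n,\,v-w_n\rangle.
\]
Here I split into two cases governed by $\lambda$. If $\lambda>0$, then since $z_n-w_n\to 0$ strongly, $\{v-w_n\}$ is bounded and $\lambda_n\to\lambda>0$, the right-hand side tends to $0$, so $\liminf_n\langle F(z_n),v-w_n\rangle\geq 0$. If $\lambda=0$, I instead restrict to the index set $\{n_k\}$ of Lemma \ref{444}; there $\|z_{n_k}-w_{n_k}\|/\lambda_{n_k}\to 0$, so the right-hand side again vanishes and $\liminf_k\langle F(z_{n_k}),v-w_{n_k}\rangle\geq 0$. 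In either case, using $\|z_n-w_n\|\to 0$, the uniform continuity of $F$ (A3) and the boundedness of $F$ on bounded sets (Lemma \ref{1991}) to replace $F(z_n)$ by $F(w_n)$ up to a vanishing error, and passing to a weakly convergent subsequence $z_{n_k}\rightharpoonup u^*$, yields the displayed key estimate.

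With the estimate in hand I would apply the dichotomy based on $\liminf_k\|F(w_{n_k})\|$. If this liminf is $0$, then along a further subsequence $\|F(w_{n_k})\|\to 0$, and assumption (A4) forces $\|F(u^*)\|\leq\liminf_k\|F(w_{n_k})\|=0$, i.e. $F(u^*)=0$. If instead $\liminf_k\|F(w_{n_k})\|>0$, there is $\gamma>0$ with $\|F(w_{n_k})\|\geq\gamma$ for large $k$; then for fixed $v\in C$ and $\epsilon>0$ I set $v_k^{\epsilon}=v+\frac{\epsilon}{\|F(w_{n_k})\|^2}F(w_{n_k})$, so that $\langle F(w_{n_k}),v_k^{\epsilon}-w_{n_k}\rangle=\langle F(w_{n_k}),v-w_{n_k}\rangle+\epsilon>0$ for large $k$ by the key estimate. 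Quasimonotonicity (A2) then gives $\langle F(v_k^{\epsilon}),v_k^{\epsilon}-w_{n_k}\rangle\geq 0$. Since $\|v_k^{\epsilon}-v\|\leq \epsilon/\gamma$, uniform continuity makes $F(v_k^{\epsilon})\to F(v)$ uniformly in $k$ as $\epsilon\to 0$; letting first $k\to\infty$ (using $w_{n_k}\rightharpoonup u^*$) and then $\epsilon\to 0$ yields $\langle F(v),v-u^*\rangle\geq 0$ for every $v\in C$, that is, $u^*\in S_D$.

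I expect the main obstacle to be this last step, the perturbation/double-limit argument in the case $\liminf_k\|F(w_{n_k})\|>0$. Because $F$ is only uniformly continuous rather than Lipschitz, one cannot pass to the limit in $v_k^{\epsilon}$ directly, as the direction $F(w_{n_k})/\|F(w_{n_k})\|^2$ need not converge; the argument must be organized as an iterated limit, controlling $\langle F(v_k^{\epsilon})-F(v),v-w_{n_k}\rangle$ by $\sup_k\|F(v_k^{\epsilon})-F(v)\|\to 0$ via uniform continuity, and controlling $\frac{\epsilon}{\|F(w_{n_k})\|^2}\langle F(v_k^{\epsilon}),F(w_{n_k})\rangle$ by $\epsilon\|F(v_k^{\epsilon})\|/\gamma$, both of which rely on the boundedness of $F$ on bounded sets supplied by Lemma \ref{1991}.
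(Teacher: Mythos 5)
Your proof is correct, and its core mechanism is the same as the paper's: the projection inequality combined with $\|z_{n_k}-w_{n_k}\|/\lambda_{n_k}\to 0$ (Lemma \ref{444} when $\lambda_n\to 0$) yields $\liminf_k\langle F(w_{n_k}),v-w_{n_k}\rangle\geq 0$, and a dichotomy on the size of $\|F(w_{n_k})\|$ then gives either $F(u^*)=0$ via (A4) or $u^*\in S_D$ via quasimonotonicity applied at points perturbed in the direction $F(w_{n_k})/\|F(w_{n_k})\|^2$, with uniform continuity absorbing the perturbation. Where you genuinely diverge is in the organization of the second branch: the paper splits it into the subcases $\limsup_k\langle F(w_{n_k}),q-w_{n_k}\rangle>0$ (direct quasimonotonicity along a subsequence, no perturbation needed) and $\limsup_k\langle F(w_{n_k}),q-w_{n_k}\rangle=0$ (perturbation of $k$-dependent size $\Gamma_k=|\langle F(w_{n_k}),q-w_{n_k}\rangle|+\omega_k\to 0$, so a single limit in $k$ suffices), whereas you run one unified perturbation of fixed size $\epsilon>0$ followed by an iterated limit, first $k\to\infty$, then $\epsilon\to 0$. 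Your version eliminates a subcase at the price of the double-limit bookkeeping, which you correctly identify and control (uniformity in $k$ of $\|F(v_k^{\epsilon})-F(v)\|$ from uniform continuity, and boundedness of $\|F(v_k^{\epsilon})\|$). Two further minor differences: you treat the case $\lambda_n\to\lambda>0$ directly from the same key estimate, where the paper defers to Lemma 3.3 of \cite{yang3}, so your argument is self-contained; and your dichotomy uses $\liminf_k\|F(w_{n_k})\|$ instead of the paper's $\limsup_k\|F(w_{n_k})\|$, so in the overlap case $\liminf_k\|F(w_{n_k})\|=0<\limsup_k\|F(w_{n_k})\|$ you conclude $F(u^*)=0$ while the paper concludes $u^*\in S_D$ --- either disjunct satisfies the lemma. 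One point worth making explicit (the paper faces it too, at $q+\Gamma_k\rho_{n_k}$): the perturbed points $v_k^{\epsilon}$ need not lie in $C$, so it matters that (A2) assumes $F$ quasimonotone on all of $H$; your use of quasimonotonicity is consistent with this.
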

\begin{proof}
If $\lambda_{n}\to\lambda>0$ as $n\to\infty$, then $\lim\limits_{n\to\infty}\lambda_{n_k}\to\lambda>0$. As shown in the proof of Lemma 3.3 in \cite{yang3}, it follows that all weak cluster points $u^*$ of $\{z_n\}$ satisfy $u^*\in S_D$ or $F(u^*)=0$. In the following we consider the case $\lim\limits_{n\to\infty}\lambda_{n}=0$.\\
If $\lambda_{n}\to0$ as $n\to\infty$, then clearly $I:=\{n\in\mathbb{N}: \lambda_{n+1}<\lambda_{n}\}$ is an infinite set. Let us denote the all indices in set $I$ by $\{n_k\}$. The boundedness of $\{z_{n_k}\}$ implies that there exists a subsequence converges weakly to $u^*$. Without loss of generality, we assume that $z_{n_k}\rightharpoonup u^*$ as $k\to\infty$. In fact (\ref{xy0}) implies that $w_{n_k}\rightharpoonup u^*$ and $u^*\in C$.\\
Moreover, by Lemma \ref{444} (i), we can get
\begin{equation}\label{555}
  \frac{\|z_{n_k}-w_{n_k}\|}{\lambda_{n_k}}
  \to0\ \text{as}\ k\to\infty.
\end{equation}
In the following, we discuss the form of weak cluster points in two cases.\\
\textbf{Case I} If $\limsup\limits_{k\to\infty}\|F(w_{n_k})\|=0$, then by $w_{n_k}\rightharpoonup u^*$ and assumption \emph{(A4)}, we have
\begin{equation*}
  0\leq\|F(u^*)\|\leq\liminf\limits_{k\to\infty}\|F(w_{n_k})\|=0.
\end{equation*}
Hence, $F(u^*)=0$.\\
\textbf{Case II} If  $\limsup\limits_{k\to\infty}\|F(w_{n_k})\|>0$, it is not loss of generality to assume that $\lim\limits_{k\to\infty}\|F(w_{n_k})\|=S>0$. Thus there exists $K\in \mathbb{N}$ such that for all $k\geq K$, $\|F(w_{n_k})\|>\frac{S}{2}$. From $w_{n_k}=P_C(z_{n_k}-\lambda_{n_k}F(z_{n_k}))$ and Lemma \ref{lm:2.1} (i), we get
\begin{equation*}
  \langle z_{n_k}-\lambda_{n_k}F(z_{n_k})-w_{n_k}, q-w_{n_k} \rangle\leq0,\ \forall q\in C,
\end{equation*}
i.e.,
\begin{equation*}
  \frac{1}{\lambda_{n_k}}\langle z_{n_k}-w_{n_k}, q-w_{n_k} \rangle\leq\langle F(z_{n_k}), q-w_{n_k}\rangle,\ \forall q\in C.
\end{equation*}
Hence,
\begin{equation}\label{cc}
  \frac{1}{\lambda_{n_k}}\langle z_{n_k}-w_{n_k}, q-w_{n_k} \rangle-\langle F(z_{n_k})-F(w_{n_k}), q-w_{n_k}\rangle\leq\langle F(w_{n_k}), q-w_{n_k}\rangle,\ \forall q\in C.
\end{equation}
According to $\lim\limits_{k\to\infty}\|z_{n_k}-w_{n_k}\|=0$ and $F$ is uniformly continuity, it follows that
\begin{equation*}
  \lim\limits_{k\to\infty}\|F(z_{n_k})-F(w_{n_k})\|=0.
\end{equation*}
Therefore, by (\ref{555}) and taking $k\to \infty$ in (\ref{cc}) we obtain
\begin{equation}\label{2}
  0\leq\liminf\limits_{k\to\infty}\langle F(w_{n_k}), q-w_{n_k}\rangle\leq\limsup\limits_{k\to\infty}\langle F(w_{n_k}), q-w_{n_k}\rangle<+\infty.
\end{equation}
If $\limsup\limits_{k\to\infty}\langle F(w_{n_k}), q-w_{n_k}\rangle>0$, then there exists a subsequence $\{w_{n_{k_j}}\} \subset \{w_{n_k}\}$ such that $\lim\limits_{j\to\infty}\langle F(w_{n_{k_j}}), q-w_{n_{k_j}}\rangle>0$. Therefore there exists $j_0$ such that for all $j\geq j_0$, $\langle F(w_{n_{k_j}}), q-w_{n_{k_j}}\rangle>0$. By the quasimonotonicity of $F$, it follows that $\langle F(q), q - w_{n_{k_j}} \rangle \geq 0$ for every $j \geq j_0$. Taking the limit as $j\to \infty$, we obtain $\langle F(q), q-u^* \rangle\geq 0$, which implies $u^*\in S_D$.\\
If $\limsup\limits_{k\to\infty}\langle F(w_{n_k}), q-w_{n_k}\rangle=0$, from (\ref{2}) we can get
\begin{equation*}
  \lim\limits_{k\to\infty}\langle F(w_{n_k}), q-w_{n_k}\rangle=\limsup\limits_{k\to\infty}\langle F(w_{n_k}), q-w_{n_k}\rangle=\liminf\limits_{k\to\infty}\langle F(w_{n_k}), q-w_{n_k}\rangle=0.
\end{equation*}
We choose a positive decreasing sequence $\{\omega_k\}$ with $\lim\limits_{k\to\infty}\omega_k=0$. Let $\Gamma_k=|\langle F(w_{n_k}), q-w_{n_k} \rangle|+\omega_k>0$ and $\rho_{n_k}=\frac{F(w_{n_k})}{\|F(w_{n_k})\|^2},\ \forall k\geq K$, so we can get $\langle F(w_{n_k}), \rho_{n_k} \rangle=1$. Thus, $\langle F(w_{n_k}), q+\Gamma_k\rho_{n_k}-w_{n_k}\rangle>0,\ \forall k\geq K$. And since $F$ is quasimonotone, there is $\langle F(q+\Gamma_k\rho_{n_k}), q+\Gamma_k\rho_{n_k}-w_{n_k}\rangle\geq0,\ \forall k\geq K$ holds, which implies that for all $k\geq K$,
\begin{align}\label{3}
  \langle F(q), q+\Gamma_k\rho_{n_k}-w_{n_k}\rangle=&\langle F(q)-F(q+\Gamma_k\rho_{n_k}), q+\Gamma_k\rho_{n_k}-w_{n_k}\rangle\notag\\
  &+\langle F(q+\Gamma_k\rho_{n_k}), q+\Gamma_k\rho_{n_k}-w_{n_k}\rangle\notag\\
  \geq&\langle F(q)-F(q+\Gamma_k\rho_{n_k}), q+\Gamma_k\rho_{n_k}-w_{n_k}\rangle\notag\\
  \geq&-\|F(q)-F(q+\Gamma_k\rho_{n_k})\|\|q+\Gamma_k\rho_{n_k}-w_{n_k}\|.
\end{align}
Due to the fact that $\|\rho_{n_k}\|<\frac{2}{S}$, this implies that $\{\rho_{n_k}\}$ is bounded. Moreover, since $\{w_{n_k}\}$ is bounded and $\lim\limits_{k\to\infty}\Gamma_k=0$, then $\{\|q+\Gamma_k\rho_{n_k}-w_{n_k}\|\}$ is bounded. And by the uniformly continuity of $F$, we can get
\begin{equation*}
  \|F(q)-F(q+\Gamma_k\rho_{n_k})\|\to0,\ \text{as}\ k\to\infty.
\end{equation*}
Then, for all $q\in C$, $\langle F(q), q-u^* \rangle\geq0$ holds by taking the limit for (\ref{3}) above. In this way, we get $u^*\in S_D$.\\
Combining the two cases above, the conclusion holds.
\end{proof}

\begin{remark}
If $\lambda_{n}\to\lambda>0$ as $n\to\infty$, there exists $N_1$ such that $\frac{3\lambda}{2}\geq\lambda_{n}\geq\frac{\lambda}{2}>0$ for all $n>N_1$. Then from the fact that for any $z\in H$,
\begin{equation}\label{eq41}
  \min\{1,\bar{\lambda}\}\|e(z,1)\|\leq\|e(z,\bar{\lambda})\|\leq\max\{1,\bar{\lambda}\}\|e(z,1)\|,
\end{equation}
we can get
\begin{equation*}
  \|e(z_n,1)\|\leq\frac{\|e(z_n,\lambda_n)\|}{\min\{1,\frac{\lambda}{2}\}}=\frac{\|z_n-w_n\|}{\min\{1,\frac{\lambda}{2}\}}
  =\frac{\lambda_n}{\min\{1,\frac{\lambda}{2}\}}\frac{\|z_n-w_n\|}{\lambda_n}
  \leq\frac{3\lambda}{\min\{2,\lambda\}}\frac{\|z_n-w_n\|}{\lambda_n}.
\end{equation*}
If $\lambda_{n}\to0$ as $n\to\infty$, there exists $N_2$ such that $\lambda_{n}<1$ for all $n>N_2$. Hence, by Lemma \ref{mono}, we have
\begin{equation*}
  \|e(z_n,1)\|\leq\frac{\|e(z_n,\lambda_n)\|}{\lambda_{n}}=\frac{\|z_n-w_n\|}{\lambda_{n}}.
\end{equation*}
Indeed, for $\epsilon>0$, $\|e(z_n,1)\|<\epsilon$ implies that $z_n$ is an approximate solution of the variational inequality problem.
\end{remark}

\begin{remark}
If $\lambda_{n}\to\lambda>0$ as $n\to\infty$, then by the fact that $\lim\limits_{n\to\infty}\|z_n-w_n\|=0$, we have
\begin{equation*}
  \lim\limits_{n\to\infty}\frac{\|z_n-w_n\|}{\lambda_n}=0.
\end{equation*}
If $\lambda_{n}\to0$ as $n\to\infty$, then according to (\ref{555}) we can get for all $n_k\in I$,
\begin{equation*}
  \lim\limits_{n\to\infty}\frac{\|z_{n_k}-w_{n_k}\|}{\lambda_{n_k}}
  =0.
\end{equation*}
Thus, we obtain
\begin{equation*}
  \liminf\limits_{n\to\infty}\frac{\|z_n-w_n\|}{\lambda_n}=0.
\end{equation*}
This implies that for any $\epsilon>0$, there exists $n$, such that $\frac{\|z_n-w_n\|}{\lambda_n}<\epsilon$. Consequently, the algorithm can terminate at a finite step and $z_n$ can be viewed as an approximate solution of the variational inequality problem.\\
It is worth noting that we use $\frac{\|z_n - w_n\|}{\lambda_n} < \epsilon$ as the stopping criterion instead of $\|e(z_n, 1)\| < \epsilon$. This is because the latter needs to perform additional projection onto the set $C$, whereas the former does not.

\end{remark}

Next, we analyze the convergence behavior of the sequence $\{z_n\}$ produced by the Algorithm \ref{AL1} in finite dimensional space.
\begin{theorem}\label{th3.1}
Assume that conditions (A1)-(A4) are satisfied and $F(z)\neq 0$ for any $z\in S\setminus S_D$. The sequence $\{z_n\}$ generated by Algorithm \ref{AL1} converges to a point $z^\ell\in S$.
\end{theorem}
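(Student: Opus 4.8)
The plan is to combine the Fej\'er-type monotonicity from Lemma~\ref{22} with the cluster-point characterization of Lemma~\ref{333}, exploiting the fact that in finite dimensions weak and strong convergence coincide. First I would invoke assumptions (A1)--(A2) and Lemma~\ref{22}(i) to guarantee that $\lim_{n\to\infty}\|z_n-x^\ell\|$ exists for every $x^\ell\in S_D$; in particular $\{z_n\}$ is bounded, so by Bolzano--Weierstrass it possesses a strongly convergent subsequence. Lemma~\ref{333} then supplies a cluster point $u^*$ of $\{z_n\}$ satisfying either $u^*\in S_D$ or $F(u^*)=0$. Passing to a subsequence I may assume $z_{n_k}\to u^*$; since $\|z_{n_k}-w_{n_k}\|\to0$ by Lemma~\ref{22}(ii), with $w_{n_k}\in C$ and $C$ closed, it follows that $w_{n_k}\to u^*$ and hence $u^*\in C$.

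The crucial step is to upgrade the dichotomy ``$u^*\in S_D$ or $F(u^*)=0$'' to the single conclusion $u^*\in S_D$. If $u^*\in S_D$ there is nothing to do. Otherwise $F(u^*)=0$, so $\langle F(u^*),w-u^*\rangle=0\geq0$ for all $w\in C$, which combined with $u^*\in C$ yields $u^*\in S$. At this point either $u^*\in S_D$, or $u^*\in S\setminus S_D$; in the latter case the standing hypothesis $F(z)\neq0$ for all $z\in S\setminus S_D$ forces $F(u^*)\neq0$, contradicting $F(u^*)=0$. Therefore $u^*\in S_D$ in every case.

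Finally, because $u^*\in S_D$, Lemma~\ref{22}(i) applies with $x^\ell=u^*$, so $\lim_{n\to\infty}\|z_n-u^*\|$ exists; the subsequence $z_{n_k}\to u^*$ forces this limit to equal $0$, whence the whole sequence $\{z_n\}$ converges strongly to $u^*$. Since $u^*\in S_D\subseteq S$, setting $z^\ell=u^*$ completes the argument. The main obstacle is precisely the identification $u^*\in S_D$: the convergence-of-distances property in Lemma~\ref{22}(i) is only available for reference points in $S_D$ and not for the larger set $S$, so the standard ``subsequence converges $\Rightarrow$ whole sequence converges'' mechanism would fail for a cluster point lying in $S\setminus S_D$. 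Excluding such a degenerate cluster point is exactly what the extra assumption $F(z)\neq0$ on $S\setminus S_D$ accomplishes, and this is where I would expect the analysis to require the most care.
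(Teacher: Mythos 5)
Your proof is correct, and it takes a genuinely leaner route than the paper's. The shared skeleton is the same: boundedness and Fej\'er-type monotonicity from Lemma \ref{22}(i), identification of a cluster point lying in $S_D$, and then the standard argument that an existing limit $\lim_n\|z_n-u^*\|$ together with a subsequence converging to $u^*$ forces whole-sequence convergence. The difference is in how membership in $S_D$ is established. The paper first proves $z^\ell\in S$ by a separate residual argument: it shows $\|e(z^\ell,1)\|=0$ with $e(z,\alpha)=z-P_C(z-\alpha F(z))$, via a case analysis on whether $\lambda_n\to\lambda>0$ or $\lambda_n\to 0$, invoking the residual-monotonicity Lemma \ref{mono}, Lemma \ref{444}(i) and inequality (\ref{eq41}); only afterwards does it combine the hypothesis $F(z)\neq 0$ on $S\setminus S_D$ with the dichotomy from the proof of Lemma \ref{333} to conclude $z^\ell\in S_D$. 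You bypass the residual machinery entirely: taking the cluster point $u^*$ supplied by Lemma \ref{333}, you note that the branch $F(u^*)=0$ already places $u^*\in S$ trivially (since $u^*\in C$ and $\langle F(u^*),w-u^*\rangle=0\geq 0$ for all $w\in C$), so the standing hypothesis eliminates the possibility $u^*\in S\setminus S_D$ and yields $u^*\in S_D$ in every case. This is precisely the shortcut the paper itself acknowledges in the remark following the theorem (``the proof of Lemma \ref{333} already implies that $z^\ell\in S$''). Your version buys brevity and, notably, uses only the statement of Lemma \ref{333} rather than its proof --- a point where you are arguably cleaner than the paper, since the lemma as stated guarantees the dichotomy only for \emph{some} cluster point, while the paper applies it to the limit of an arbitrarily chosen convergent subsequence and must therefore reach into the lemma's internal argument. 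What the paper's longer route buys in exchange is the fixed-point identity $z^\ell=P_C(z^\ell-F(z^\ell))$ established independently of the quasimonotonicity analysis, which connects to the residual-based approximate-solution certificates and stopping criteria discussed in the remarks surrounding the theorem.
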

\begin{proof}
It follows from Lemma \ref{22} that $\{z_n\}$ is bounded, there exists a subsequence $\{z_{n_k}\}$ such that $z_{n_k}\to z^\ell$. \\
If $\lambda_{n}\to\lambda>0$ as $n\to\infty$, then we can get $\lim\limits_{k\to\infty}\lambda_{n_k}=\lambda>0$, which implies that there exists $K_1$ such that $\lambda_{n_k}\geq\frac{\lambda}{2}>0$ for all $k>K_1>K$. Then, from (\ref{eq41}) and (\ref{xy0}), we obtain
\begin{equation*}
  \|e(z^\ell,1)\|=\lim\limits_{k\to\infty}\|e(z_{n_k},1)\|\leq\lim\limits_{k\to\infty}\frac{\|z_{n_k}-w_{n_k}\|}{\min\{1,\frac{\lambda}{2}\}}=0.
\end{equation*}
If $\lambda_{n}\to0$ as $n\to\infty$, then clearly $I:=\{n\in\mathbb{N}: \lambda_{n+1}<\lambda_{n}\}$ is an infinite set. Let us denote the indices in set $I$ by $\{n_k\}$. It is obvious that $\lambda_{n_k}\to0$ as $k\to\infty$. The boundedness of $\{z_{n_k}\}$ ensures the existence of a convergent subsequence, which we still denote by $\{z_{n_k}\}$, with limit $z^\ell$, i.e., $z_{n_k}\to z^\ell$. Moreover, $\{\lambda_{n_k}\}$ satisfies $\lambda_{n_k + 1} < \lambda_{n_k}$. Hence, according to the definition of $\{\lambda_{n_k+1}\}$, we have
\begin{equation*}
  \lambda_{n_k+1}=\frac{\mu\|z_{n_k}-w_{n_k}\|}{\|F(z_{n_k})-F(w_{n_k})\|}<\lambda_{n_k}.
\end{equation*}
Since $\lim\limits_{k\to\infty}\lambda_{n_k}=0$, there exists $K_2$ such that $\lambda_{n_k}<1$ for all $k>K_2>K$.
Thus, by Lemma \ref{mono} and Lemma \ref{444} (i), it follows that
\begin{equation*}
  \|e(z^\ell,1)\|=\lim\limits_{k\to\infty}\|e(z_{n_k},1)\|\leq\lim\limits_{k\to\infty}\frac{\|z_{n_k}-w_{n_k}\|}{\lambda_{n_k}}
  =0.
\end{equation*}
This implies $z^\ell=P_C(z^\ell-F(z^\ell))$, that is $z^\ell\in S$. Then, it follows from the assumption that $F(z^\ell)\neq 0$ for all $z^\ell\in S\setminus S_D$, and from the proof of Lemma \ref{333}, we have $z^\ell\in S_D$. Furthermore, from Lemma \ref{22} (i), we obtain $\{\|z_n-z^\ell\|\}$ is convergent. Therefore, from $z_{n_k}\to z^\ell$, we obtain $z_n\to z^\ell$ as $n\to\infty$.
\end{proof}

\begin{remark}
In fact, the proof of Lemma \ref{333} already implies that $z^\ell\in S$. Here, we present another approach to establish that $z^\ell\in S$.
\end{remark}

In the following, we establish the strong convergence of the sequence $\{z_n\}$ generated by the algorithm in infinite dimensional Hilbert spaces under the following condition:
\begin{itemize}
  \item \emph{(A5)} For $\{w_n\}\subseteq C$, if $w_n\rightharpoonup u\in S_D$, then there exist $\epsilon\geq0$ such that the following inequality holds:
\begin{equation}\label{A5}
  \liminf\limits_{n\to\infty}\frac{\langle F(w_n), w_n - u \rangle}{\|w_n - u\|^{2 + \epsilon}} > 0.
\end{equation}
\end{itemize}

\begin{remark}
For $\{w_n\}\subseteq C$ and $w_n\rightharpoonup u\in S_D\subseteq S$, we can obtain $\langle F(u), w_n-u \rangle\geq 0$. If $F$ is $\eta$-strongly pseudomonotone operator, it is clear that $\langle F(w_n), w_n-u \rangle\geq \eta\|w_n-u\|^2$. This implies that (\ref{A5}) holds with $\epsilon=0$.


\end{remark}

\begin{theorem}\label{th3.1}
Assume that conditions (A1)-(A5) are satisfied and $F(z)\neq 0$ for any $z\in S\setminus S_D$. The sequence $\{z_n\}$ generated by Algorithm \ref{AL1} converges strongly to a point $z^\ell\in S$.
\end{theorem}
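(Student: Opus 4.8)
The plan is to combine the weak-cluster-point analysis already developed with the coercivity-type hypothesis (A5) to upgrade weak convergence of a suitable subsequence to strong convergence, and then to propagate this to the whole sequence through the quantity $\|z_n-x^\ell\|$, whose limit is known to exist.

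First I would invoke Lemma \ref{22}: since (A1)--(A2) hold, $\lim_{n\to\infty}\|z_n-x^\ell\|$ exists for every $x^\ell\in S_D$, the sequence $\{z_n\}$ is bounded, and $\|z_n-w_n\|\to 0$, so $\{w_n\}$ shares the weak cluster points of $\{z_n\}$. Next, Lemma \ref{333} produces a weak cluster point $u^*$ with $u^*\in S_D$ or $F(u^*)=0$. In the second alternative, $u^*\in C$ together with $F(u^*)=0$ gives $\langle F(u^*),w-u^*\rangle=0\geq 0$ for all $w\in C$, so $u^*\in S$; the hypothesis $F(z)\neq 0$ for $z\in S\setminus S_D$ then forces $u^*\in S_D$. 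Hence in every case $u^*\in S_D\subseteq S$. I would then fix a subsequence with $z_{n_k}\rightharpoonup u^*$, chosen inside $I=\{n:\lambda_{n+1}<\lambda_n\}$ when $\lambda_n\to 0$, so that also $w_{n_k}\rightharpoonup u^*$.

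The heart of the argument is to show $w_{n_k}\to u^*$ strongly. Applying Lemma \ref{lm:2.1}(i) to $w_{n_k}=P_C(z_{n_k}-\lambda_{n_k}F(z_{n_k}))$ with the test point $u^*\in C$, and splitting $F(w_{n_k})=\big(F(w_{n_k})-F(z_{n_k})\big)+F(z_{n_k})$, yields
\begin{equation*}
\langle F(w_{n_k}),\,w_{n_k}-u^*\rangle\leq\Big(\|F(z_{n_k})-F(w_{n_k})\|+\frac{\|z_{n_k}-w_{n_k}\|}{\lambda_{n_k}}\Big)\|w_{n_k}-u^*\|.
\end{equation*}
By uniform continuity (A3) and $\|z_{n_k}-w_{n_k}\|\to 0$ the first term in the bracket tends to $0$; the second tends to $0$ by Lemma \ref{444}(i) in the case $\lambda_n\to 0$, and because $\lambda_{n_k}$ is bounded away from $0$ in the case $\lambda_n\to\lambda>0$. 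On the other hand, since $u^*\in S_D$ and $w_{n_k}\rightharpoonup u^*$, hypothesis (A5) supplies a constant $c>0$ with $\langle F(w_{n_k}),w_{n_k}-u^*\rangle\geq\frac{c}{2}\|w_{n_k}-u^*\|^{2+\epsilon}$ for all large $k$. Dividing the displayed inequality by $\|w_{n_k}-u^*\|$, the case $w_{n_k}=u^*$ being trivial, leaves $\frac{c}{2}\|w_{n_k}-u^*\|^{1+\epsilon}$ bounded above by a sequence tending to $0$, so $\|w_{n_k}-u^*\|\to 0$.

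Finally, $\|z_{n_k}-w_{n_k}\|\to 0$ gives $z_{n_k}\to u^*$ strongly, whence $\lim_{k\to\infty}\|z_{n_k}-u^*\|=0$; since $u^*\in S_D$, the limit $\lim_{n\to\infty}\|z_n-u^*\|$ exists by Lemma \ref{22}(i), so this limit must be $0$ and the whole sequence satisfies $z_n\to u^*=:z^\ell\in S_D\subseteq S$. I expect the principal obstacle to be the bookkeeping around the factor $\lambda_{n_k}^{-1}$: establishing $\|z_{n_k}-w_{n_k}\|/\lambda_{n_k}\to 0$ forces one to separate the cases $\lambda_n\to\lambda>0$ and $\lambda_n\to 0$ and to appeal to Lemma \ref{444}(i) in the latter, while also ensuring that the subsequence along which (A5) is invoked is exactly the one along which these estimates hold and along which $u^*\in S_D$ was secured.
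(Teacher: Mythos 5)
Your proof is correct, and its core estimate follows a genuinely different route from the paper's. The paper derives strong convergence of $\{w_{n_k}\}$ from the energy inequalities (\ref{3.331}) and (\ref{bb}): it retains the cross term $-2\lambda_{n_k}\langle F(w_{n_k}),w_{n_k}-z^\ell\rangle$, inserts the lower bound (\ref{tiaojian}) supplied by (A5), and then argues differently in the two stepsize regimes --- when $\lambda_n\to\lambda>0$ the telescoping quantity $\|z_{n_k}-z^\ell\|^2-\|z_{n_k+1}-z^\ell\|^2$ tends to $0$ while $\lambda_{n_k}$ stays bounded below, and when $\lambda_n\to 0$ it divides by $\lambda_{n_k}$, bounds the difference of squares by $M_1\|z_{n_k}-z_{n_k+1}\|/\lambda_{n_k}$, and appeals to Lemma \ref{444}(ii). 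You instead bound $\langle F(w_{n_k}),w_{n_k}-u^*\rangle$ from above directly out of the projection characterization, Lemma \ref{lm:2.1}(i) --- in effect inequality (\ref{cc}) of Lemma \ref{333} with $q=u^*$ plus Cauchy--Schwarz --- by $\bigl(\|F(z_{n_k})-F(w_{n_k})\|+\|z_{n_k}-w_{n_k}\|/\lambda_{n_k}\bigr)\|w_{n_k}-u^*\|$, and squeeze it against the (A5) lower bound to conclude $\|w_{n_k}-u^*\|^{1+\epsilon}\to 0$. The two regimes then enter your argument only through the single fact $\|z_{n_k}-w_{n_k}\|/\lambda_{n_k}\to 0$ (boundedness of $\lambda_{n_k}$ away from zero in one case, Lemma \ref{444}(i) in the other), so your proof is unified where the paper's bifurcates, and it dispenses with the Fej\'er-inequality manipulation, the constant $M_1$, and Lemma \ref{444}(ii) altogether; what the paper's version keeps, by staying inside the descent inequality, is an explicit per-iteration decrease estimate of the form (\ref{345}). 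The surrounding structure --- identifying the cluster point $u^*\in S_D$ via Lemma \ref{333} together with the hypothesis $F(z)\neq 0$ on $S\setminus S_D$ (your explicit treatment of the alternative $F(u^*)=0$ is a point the paper leaves implicit), and upgrading subsequential to whole-sequence strong convergence through the existing limit $\lim_{n\to\infty}\|z_n-u^*\|$ from Lemma \ref{22}(i) --- coincides with the paper's, and your closing caveat about keeping every estimate on the same subsequence inside $I$ is exactly the bookkeeping the paper's Case 2 carries out.
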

\begin{proof}
We still discuss the proof in two cases as follows:\\
\textbf{Case 1} $\lambda_{n}\to\lambda>0$ as $n\to\infty$.\\
According to Lemma \ref{333}, there exists the subsequence $\{z_{n_k}\}$ of $\{z_n\}$ such that $z_{n_k}\rightharpoonup z^\ell\in S$. And $F(z^\ell)\neq 0$ for all  $z^\ell\in S\setminus S_D$ means $z^\ell\in S_D$. Since $\lim\limits_{n\to\infty}\|z_{n_k}-w_{n_k}\|=0$, then $w_{n_k}\rightharpoonup z^\ell\in S_D$. And as $\{w_{n_k}\}\subseteq C$, by assumption \emph{(A5)}, we have $\exists K'>K$ and $c>0$ such that for all $k>K'$,
\begin{equation}\label{tiaojian}
  \langle F(w_{n_k}), w_{n_k}-z^\ell \rangle\geq c\|w_{n_k}-z^\ell\|^{2+\epsilon}.
\end{equation}
Then, by (\ref{3.331}), the definition of $\{\lambda_{{n_k}+1}\}$ and (\ref{tiaojian}), we have
\begin{align*}
  \|z_{{n_k}+1}-z^\ell\|^2
  \leq&\|z_{n_k}-z^\ell\|^2-\|w_{n_k}-z_{n_k}\|^2+\lambda_{n_k}^2\|F(w_{n_k})-F(z_{n_k})\|^2
  \notag\\&-2\lambda_{n_k}\langle F(w_{n_k}), w_{n_k}-z^\ell \rangle\notag\\
  \leq&\|z_{n_k}-z^\ell\|^2-(1-\frac{\mu^2\lambda_{n_k}^2}{\lambda_{{n_k}+1}^2})\|w_{n_k}-z_{n_k}\|^2
  -2\lambda_{n_k}\langle F(w_{n_k}), w_{n_k}-z^\ell \rangle\notag\\
  \leq&\|z_{n_k}-z^\ell\|^2-(1-\frac{\mu^2\lambda_{n_k}^2}{\lambda_{{n_k}+1}^2})\|w_{n_k}-z_{n_k}\|^2
  -2c\lambda_{n_k}\|w_{n_k}-z^\ell\|^{2+\epsilon}.
\end{align*}
Therefore,
\begin{equation}\label{345}
  2c\lambda_{n_k}\|w_{n_k}-z^\ell\|^{2+\epsilon}\leq\|z_{n_k}-z^\ell\|^2-\|z_{{n_k}+1}-z^\ell\|^2
  -(1-\frac{\mu^2\lambda_{n_k}^2}{\lambda_{{n_k}+1}^2})\|w_{n_k}-z_{n_k}\|^2.
\end{equation}
By Lemma \ref{22} (ii) and (iii), it is easy to know that the $\lim\limits_{n\to\infty}\|z_n-z^\ell\|$ exists and $\lim\limits_{k\to\infty}\|z_{n_k}-w_{n_k}\|=0$. Moreover, $\lim\limits_{k\to\infty}(1-\frac{\mu^2\lambda_{n_k}^2}{\lambda_{{n_k}+1}^2})= 1-\mu^2>0$. Then, from (\ref{345}), we conclude that $\lim\limits_{k\to\infty}\|w_{n_k}-z^\ell\|=0$. Hence,
\begin{equation*}
  \|z_{n_k}-z^\ell\|\leq\|z_{n_k}-w_{n_k}\|+\|w_{n_k}-z^\ell\|\to0,\ k\to\infty.
\end{equation*}
Consequently, by the fact that $\lim\limits_{n\to\infty}\|z_n-z^\ell\|$ exists, we obtain
\begin{equation*}
  \|z_n-z^\ell\|\to0, \ n\to\infty.
\end{equation*}
\textbf{Case 2} $\lambda_{n}\to0$ as $n\to\infty$.\\
If $\lambda_{n}\to0$ as $n\to\infty$, we know that $I:=\{n\in\mathbb{N}: \lambda_{n+1}<\lambda_{n}\}$ is an infinite set. And we denote the indices in set $I$ by $\{n_k\}$. It is obvious that $\lambda_{n_k}\to0$ as $k\to\infty$ and
\begin{equation*}
  \lambda_{{n_k}+1}<\lambda_{n_k}.
\end{equation*}
Furthermore, by the proof of Lemma \ref{333} and the assumption $F(z^\ell)\neq 0$ for all  $z^\ell\in S\setminus S_D$, we can get $z_{n_k}\rightharpoonup z^\ell\in S_D$. Then, by (\ref{xy0}), we have $w_{n_k}\rightharpoonup z^\ell\in S_D$. Thus by assumption \emph{(A5)}, it follows that (\ref{tiaojian}) holds.\\
According to (\ref{3.331}), (\ref{bb}) and (\ref{tiaojian}), for sufficiently large $k$, we get
\begin{align*}
  \|z_{{n_k}+1}-z^\ell\|^2\leq&\|z_{n_k}-z^\ell\|^2-(1-\mu^2)\|z_{n_k}-w_{n_k}\|^2-2\lambda_{n_k}\langle F(w_{n_k}), w_{n_k}-z^\ell \rangle\\
  &+(\lambda_{n_k}^2-\lambda_{{n_k}+1}^2)\\
  \leq&\|z_{n_k}-z^\ell\|^2-2\lambda_{n_k}\langle F(w_{n_k}), w_{n_k}-z^\ell \rangle+(\lambda_{n_k}^2-\lambda_{{n_k}+1}^2)\\
  \leq&\|z_{n_k}-z^\ell\|^2-2c\lambda_{n_k}\|w_{n_k}-z^\ell\|^{2+\epsilon}+\lambda_{n_k}^2.
\end{align*}
Since $\{z_{n_k}\}$ is bounded, for any $k$, there exists $M_1$ such that $\|z_{n_k}-z^\ell\|+\|z_{{n_k}+1}-z^\ell\|\leq M_1$. Hence,
\begin{align*}
  2c\|w_{n_k}-z^\ell\|^{2+\epsilon}\leq&\frac{1}{\lambda_{n_k}}(\|z_{n_k}-z^\ell\|^2-\|z_{{n_k}+1}-z^\ell\|^2)+\lambda_{n_k}\\
  =&\frac{1}{\lambda_{n_k}}[(\|z_{n_k}-z^\ell\|-\|z_{{n_k}+1}-z^\ell\|)(\|z_{n_k}-z^\ell\|+\|z_{{n_k}+1}-z^\ell\|)]
  +\lambda_{n_k}\\
  \leq&M_1\frac{\|z_{n_k}-z_{{n_k}+1}\|}{\lambda_{n_k}}+\lambda_{n_k}.
\end{align*}
Then, from $\lim\limits_{k\to\infty}\lambda_{n_k}=0$ and Lemma \ref{444} (ii), we have
\begin{equation*}
  \lim\limits_{k\to\infty}\|w_{n_k}-z^\ell\|=0.
\end{equation*}
Thus,
\begin{equation*}
  \|z_{n_k}-z^\ell\|\leq\|z_{n_k}-w_{n_k}\|+\|w_{n_k}-z^\ell\|\to0,\ k\to\infty.
\end{equation*}
Moreover, according to Lemma \ref{22} (ii) it is known that $\lim\limits_{n\to\infty}\|z_n-z^\ell\|$ exists. Therefore, we finally get
\begin{equation*}
  \lim\limits_{n\to\infty}\|z_n-z^\ell\|=0.
\end{equation*}
This completes the proof.
\end{proof}

\section{Numerical Experiments}\label{sec55}
This section aims to validate the effectiveness and advantages of the proposed algorithm by comparing it with Algorithm 1 and Algorithm 2 presented in \cite{thong201911}. In the following tables, ``Iter'' indicates the number of iterations performed, and ``CPU Time'' measures runtime in seconds. All codes were developed in MATLAB R2022a and tested on a PC with an Intel(R) Core(TM) i7-6700 CPU @ 3.40 GHz 3.41 GHz and 16.00 GB of RAM.\\
\textbf{Example 4.1.} Let $H=l_2$ and $C:=\{z=(z_1, z_2,\ldots, z_j,\ldots)\in H: |z_j|\leq \frac{1}{j}, j=1,2,\ldots,n,\ldots\}$. Consider the operator $F : C \to H$ defined as follows: for some $\theta>0$,
\begin{equation*}
  F(z)=(\|z\|+\frac{1}{\|z\|+\theta})z.
\end{equation*}
It follows directly that $S=\{0\}$. In addition, the operator $F$ is pseudomonotone on $H$, and it is uniformly continuous and sequentially weakly continuous on $C$, however, it does not satisfy Lipschitz continuity on $H$ (refer to \cite{thong201911} for more details). We consider $H=\mathbb{R}^m$ with varying dimensions, and different $\theta$. In this case, the feasible set $C$ takes the following form:
\begin{equation*}
  C:=\{x\in \mathbb{R}^m: -\frac{1}{j}\leq x_j\leq \frac{1}{j}, j=1,2,\ldots,m\}.
\end{equation*}
The parameters are set as follows:
\begin{itemize}
  \item For our proposed Algorithm \ref{AL1}, we set $\mu=0.3$, $\lambda_1=0.01$ and $\xi_n=\frac{1}{(n+1)^{1.1}}$.
  \item For Algorithm 1 and Algorithm 2 in \cite{thong201911}, let $\gamma=0.1$, $l=0.5$, $\mu=0.8$. In addition, for all $n\in \mathbb{N}$,
      \begin{equation*}
        \alpha_n=\frac{1}{\sqrt{n}+2},\ \beta_n=\frac{1-\alpha_n}{2}.
      \end{equation*}
\end{itemize}
The initial point is set as $x_1 = (1, 1, \ldots, 1) \in \mathbb{R}^m$. The stopping criterion is defined by $E_n = \frac{\|z_n - w_n\|}{\lambda_n} < 10^{-8}$, with the maximum number of iterations limited to 5000.\\
We compare the proposed Algorithm \ref{AL1} (referred to as Alg. \ref{AL1}) with Algorithm 1 and Algorithm 2 from \cite{thong201911} (referred to as Alg. 1 and Alg. 2, respectively) under various dimensions $m$ and different values of $\theta$. The evaluation focused on two key metrics: the error measure $E_n$ and CPU time. To ensure statistical reliability, all CPU time measurements were obtained by averaging the results from 20 randomized runs. The comparative results are visually presented in Fig. \ref{fig1} and Fig. \ref{fig2}. Additionally, we compare all algorithms in terms of the number of iterations, the number of projection operations (denoted as Num$_{P_C}$), the number of function evaluations (denoted as Num$_F$), and CPU time. The results are presented in Table \ref{table1} and Table \ref{table2}. which clearly demonstrate the effectiveness and superiority of our proposed algorithm.

\begin{figure}[htbp]
    \centering
    \subfigure[$\theta=1$]{%
        \includegraphics[width=0.3\textwidth]{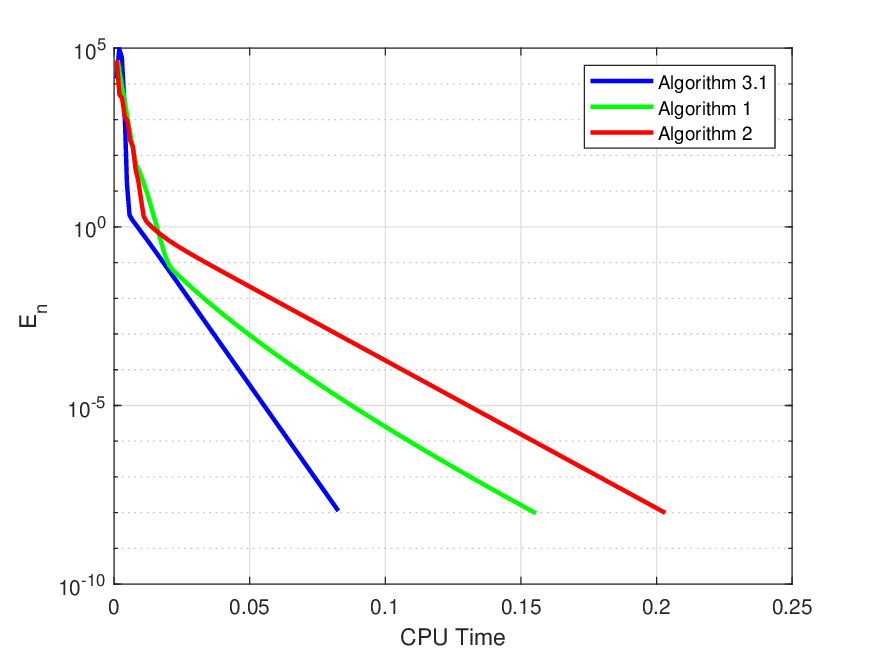}
        \label{fig:1}
    }
    \hfill
    \subfigure[$\theta=5$]{%
        \includegraphics[width=0.3\textwidth]{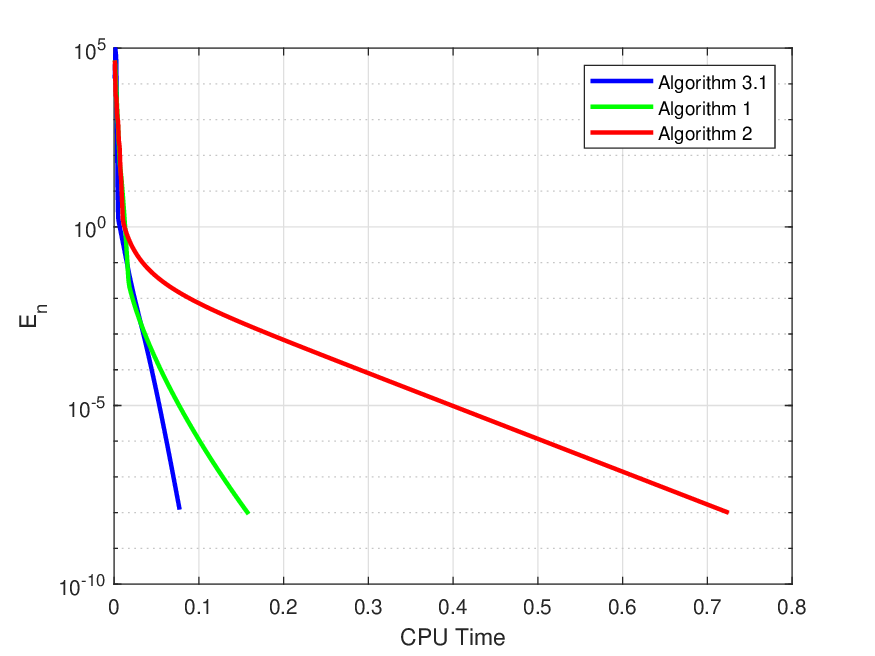}
        \label{fig:2}
    }
    \hfill
    \subfigure[$\theta=10$]{%
        \includegraphics[width=0.3\textwidth]{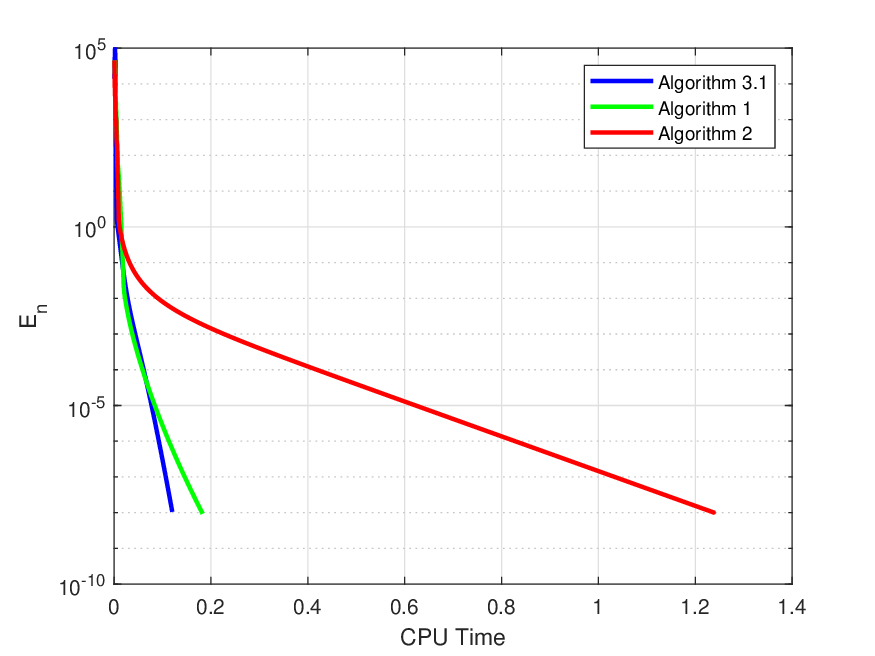}
        \label{fig:3}
    }

    \caption{Comparison of $E_n$ and CPU Time for different values of $\theta$ with $m=20000$.}
    \label{fig1}
\end{figure}

\begin{figure}[htbp]
    \centering
    \subfigure[$\theta=1$]{%
        \includegraphics[width=0.3\textwidth]{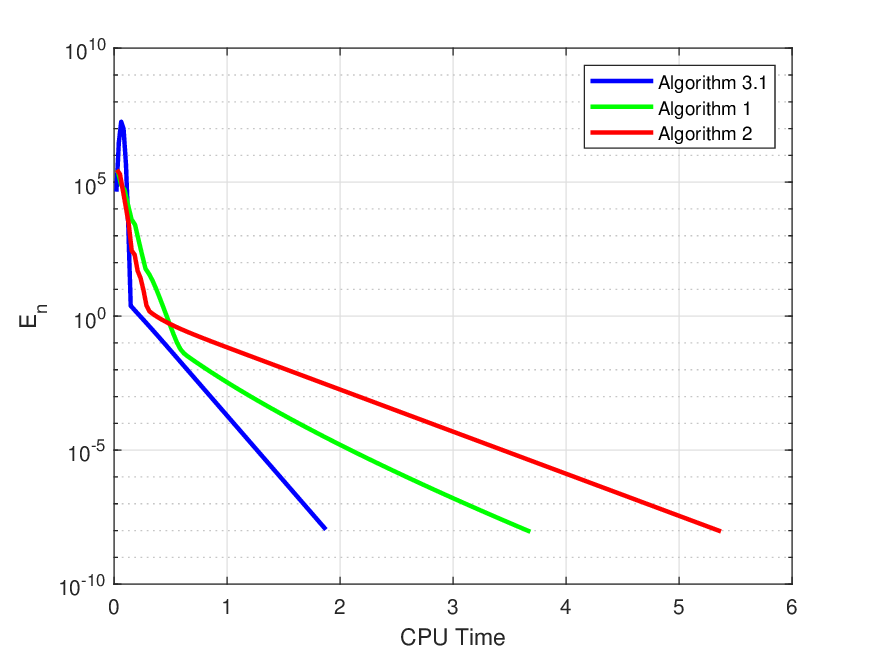}
        \label{fig:11}
    }
    \hfill
    \subfigure[$\theta=5$]{%
        \includegraphics[width=0.3\textwidth]{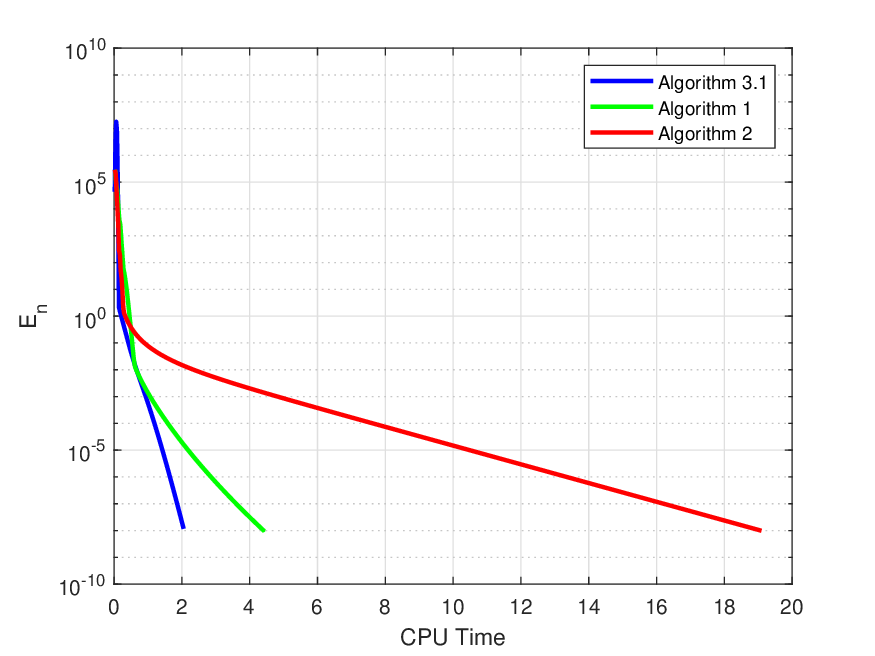}
        \label{fig:22}
    }
    \hfill
    \subfigure[$\theta=10$]{%
        \includegraphics[width=0.3\textwidth]{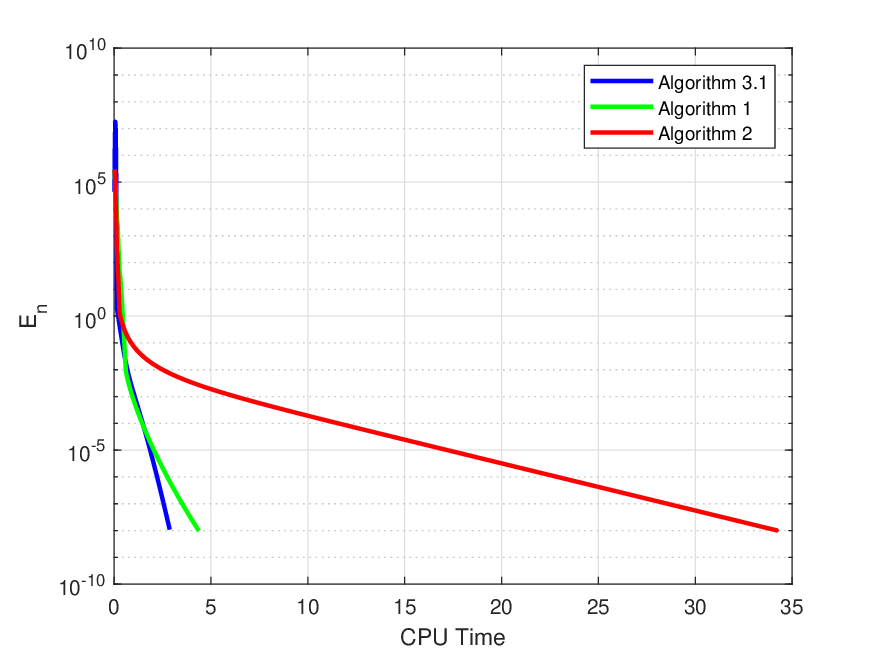}
        \label{fig:33}
    }

    \caption{Comparison of $E_n$ and CPU Time for different values of $\theta$ with $m=200000$.}
    \label{fig2}
\end{figure}

\begin{table}[htpb]
\caption{Numerical performance of all algorithms with $m=20000$.}
\begin{tabular}{ccccccccc}
\toprule
$\theta$&Algorithm&Iter&Num$_{P_C}$&Num$_F$&CPU time\\
\midrule
$\theta=1$&Alg. \ref{AL1}&88&88&176&0.0836\\
&Alg. 1&205&222&427&0.2031\\
&Alg. 2&115&130&245&0.1555\\
\midrule
$\theta=5$&Alg. \ref{AL1}&96&96&192&0.0781\\
&Alg. 1&775&792&1567&0.7250\\
&Alg. 2&140&155&295&0.1586\\
\midrule
$\theta=10$&Alg. \ref{AL1}&132&132&264&0.1211\\
&Alg. 1&1396&1413&2809&1.2398\\
&Alg. 2&139&154&293&0.1828\\
\bottomrule
\end{tabular}
\label{table1}
\end{table}

\begin{table}[htpb]
\caption{Numerical performance of all algorithms with $m=200000$.}
\begin{tabular}{ccccccccc}
\toprule
$\theta$&Algorithm&Iter&Num$_{P_C}$&Num$_F$&CPU time\\
\midrule
$\theta=1$&Alg. \ref{AL1}&90&90&180&1.8945\\
&Alg. 1&206&232&438&5.3688\\
&Alg. 2&119&148&267&3.6820\\
\midrule
$\theta=5$&Alg. \ref{AL1}&100&100&200&2.0727\\
&Alg.1&776&802&1578&19.0930\\
&Alg.2&144&173&317&4.4313\\
\midrule
$\theta=10$&Alg. \ref{AL1}&137&137&274&2.8922\\
&Alg.1&1396&1422&2818&34.2523\\
&Alg.2&143&172&315&4.3781\\
\bottomrule
\end{tabular}
\label{table2}
\end{table}

\section{Conclusion}\label{sec6}
This work adopts Tseng's extragradient method combined with the self-adaptive stepsize criterion to solve variational inequality problems characterized by quasimonotone and non-Lipschitz operators in real Hilbert spaces. We establish the convergence of the iterative sequence in finite dimensional spaces and, under suitable assumptions, further demonstrate its strong convergence to a solution of the problem in infinite dimensional Hilbert spaces. Finally, we compare the proposed self-adaptive algorithm with existing linesearch-based methods in the literature. The results provide clear evidence of the enhanced performance achieved by the proposed method.

\bmhead{Acknowledgements}
The authors express their appreciation to the responsible editor and anonymous reviewers for their careful reading.

\bmhead{Funding} This work was supported by the National Natural Science Foundation of China (No. 12261019), the Natural Science Basic Research Program Project of Shaanxi Province (No. 2024JC-YBMS-019), the Natural Science Foundation of Shaanxi Province of China (No. 2023-JC-YB-049), the Fundamental Research Funds for the Central Universities, and the Innovation Fund of Xidian University (No. YJSJ25009).
\section*{Data Availability} No data were generated or analyzed in this theoretical study, therefore data availability is not applicable.
\section*{Declarations}

\textbf{Conflict of interest} The authors declare no conflicts of interest.

\bigskip
\bibliography{sn-bibliography}


\begin{thebibliography}{28}
\ifx \bisbn   \undefined \def \bisbn  #1{ISBN #1}\fi
\ifx \binits  \undefined \def \binits#1{#1}\fi
\ifx \bauthor  \undefined \def \bauthor#1{#1}\fi
\ifx \batitle  \undefined \def \batitle#1{#1}\fi
\ifx \bjtitle  \undefined \def \bjtitle#1{#1}\fi
\ifx \bvolume  \undefined \def \bvolume#1{\textbf{#1}}\fi
\ifx \byear  \undefined \def \byear#1{#1}\fi
\ifx \bissue  \undefined \def \bissue#1{#1}\fi
\ifx \bfpage  \undefined \def \bfpage#1{#1}\fi
\ifx \blpage  \undefined \def \blpage #1{#1}\fi
\ifx \burl  \undefined \def \burl#1{\textsf{#1}}\fi
\ifx \doiurl  \undefined \def \doiurl#1{\url{https://doi.org/#1}}\fi
\ifx \betal  \undefined \def \betal{\textit{et al.}}\fi
\ifx \binstitute  \undefined \def \binstitute#1{#1}\fi
\ifx \binstitutionaled  \undefined \def \binstitutionaled#1{#1}\fi
\ifx \bctitle  \undefined \def \bctitle#1{#1}\fi
\ifx \beditor  \undefined \def \beditor#1{#1}\fi
\ifx \bpublisher  \undefined \def \bpublisher#1{#1}\fi
\ifx \bbtitle  \undefined \def \bbtitle#1{#1}\fi
\ifx \bedition  \undefined \def \bedition#1{#1}\fi
\ifx \bseriesno  \undefined \def \bseriesno#1{#1}\fi
\ifx \blocation  \undefined \def \blocation#1{#1}\fi
\ifx \bsertitle  \undefined \def \bsertitle#1{#1}\fi
\ifx \bsnm \undefined \def \bsnm#1{#1}\fi
\ifx \bsuffix \undefined \def \bsuffix#1{#1}\fi
\ifx \bparticle \undefined \def \bparticle#1{#1}\fi
\ifx \barticle \undefined \def \barticle#1{#1}\fi
\bibcommenthead
\ifx \bconfdate \undefined \def \bconfdate #1{#1}\fi
\ifx \botherref \undefined \def \botherref #1{#1}\fi
\ifx \url \undefined \def \url#1{\textsf{#1}}\fi
\ifx \bchapter \undefined \def \bchapter#1{#1}\fi
\ifx \bbook \undefined \def \bbook#1{#1}\fi
\ifx \bcomment \undefined \def \bcomment#1{#1}\fi
\ifx \oauthor \undefined \def \oauthor#1{#1}\fi
\ifx \citeauthoryear \undefined \def \citeauthoryear#1{#1}\fi
\ifx \endbibitem  \undefined \def \endbibitem {}\fi
\ifx \bconflocation  \undefined \def \bconflocation#1{#1}\fi
\ifx \arxivurl  \undefined \def \arxivurl#1{\textsf{#1}}\fi
\csname PreBibitemsHook\endcsname

\bibitem[\protect\citeauthoryear{Cottle and Yao}{1992}]{co}
\begin{barticle}
\bauthor{\bsnm{Cottle}, \binits{R.W.}},
\bauthor{\bsnm{Yao}, \binits{J.C.}}:
\batitle{Pseudo-monotone complementarity problems in {H}ilbert space}.
\bjtitle{J. Optimiz. Theory App.}
\bvolume{75}(\bissue{2}),
\bfpage{281}--\blpage{295}
(\byear{1992})
\end{barticle}
\endbibitem

\bibitem[\protect\citeauthoryear{Ye and He}{2015}]{ye}
\begin{barticle}
\bauthor{\bsnm{Ye}, \binits{M.L.}},
\bauthor{\bsnm{He}, \binits{Y.R.}}:
\batitle{A double projection method for solving variational inequalities
  without monotonicity}.
\bjtitle{Comput. Optim. Appl.}
\bvolume{60},
\bfpage{141}--\blpage{150}
(\byear{2015})
\end{barticle}
\endbibitem

\bibitem[\protect\citeauthoryear{Kinderlehrer and
  Stampacchia}{2000}]{kinderlehrer}
\begin{bbook}
\bauthor{\bsnm{Kinderlehrer}, \binits{D.}},
\bauthor{\bsnm{Stampacchia}, \binits{G.}}:
\bbtitle{An Introduction to Variational Inequalities and Their Applications}.
\bpublisher{Academic Press},
\blocation{New York}
(\byear{2000})
\end{bbook}
\endbibitem

\bibitem[\protect\citeauthoryear{Konnov}{2001}]{konnov}
\begin{bbook}
\bauthor{\bsnm{Konnov}, \binits{I.V.}}:
\bbtitle{Combined Relaxation Methods for Variational Inequalities}.
\bpublisher{Springer},
\blocation{Berlin}
(\byear{2001})
\end{bbook}
\endbibitem

\bibitem[\protect\citeauthoryear{Facchinei and Pang}{2003}]{vip}
\begin{bbook}
\bauthor{\bsnm{Facchinei}, \binits{F.}},
\bauthor{\bsnm{Pang}, \binits{J.S.}}:
\bbtitle{Finite-dimensional Variational Inequalities and Complementarity
  Problems}
vol. \bseriesno{1-2}.
\bpublisher{Springer},
\blocation{New York}
(\byear{2003})
\end{bbook}
\endbibitem

\bibitem[\protect\citeauthoryear{Korpelevich}{1976}]{kor}
\begin{barticle}
\bauthor{\bsnm{Korpelevich}, \binits{G.M.}}:
\batitle{The extragradient method for finding saddle points and other
  problems}.
\bjtitle{Ekon. Mat. Metody}
\bvolume{12},
\bfpage{747}--\blpage{756}
(\byear{1976})
\end{barticle}
\endbibitem

\bibitem[\protect\citeauthoryear{Solodov and Svaiter}{1999}]{sol}
\begin{barticle}
\bauthor{\bsnm{Solodov}, \binits{M.V.}},
\bauthor{\bsnm{Svaiter}, \binits{B.F.}}:
\batitle{A new projection method for variational inequality problems}.
\bjtitle{SIAM J. Control Optim.}
\bvolume{37}(\bissue{3}),
\bfpage{765}--\blpage{776}
(\byear{1999})
\end{barticle}
\endbibitem

\bibitem[\protect\citeauthoryear{Censor et~al.}{2011}]{cen}
\begin{barticle}
\bauthor{\bsnm{Censor}, \binits{Y.}},
\bauthor{\bsnm{Gibali}, \binits{A.}},
\bauthor{\bsnm{Reich}, \binits{S.}}:
\batitle{The subgradient extragradient method for solving variational
  inequalities in {H}ilbert space}.
\bjtitle{J. Optim. Theory Appl.}
\bvolume{148},
\bfpage{318}--\blpage{335}
(\byear{2011})
\end{barticle}
\endbibitem

\bibitem[\protect\citeauthoryear{Tseng}{2000}]{tseng2000}
\begin{barticle}
\bauthor{\bsnm{Tseng}, \binits{P.}}:
\batitle{A modified forward-backward splitting method for maximal monotone
  mappings}.
\bjtitle{SIAM J. Control Optim.}
\bvolume{38}(\bissue{2}),
\bfpage{431}--\blpage{446}
(\byear{2000})
\end{barticle}
\endbibitem

\bibitem[\protect\citeauthoryear{Malitsky}{2015}]{mali}
\begin{barticle}
\bauthor{\bsnm{Malitsky}, \binits{Y.}}:
\batitle{Projected reflected gradient methods for monotone variational
  inequalities}.
\bjtitle{SIAM J. Optim.}
\bvolume{25}(\bissue{1}),
\bfpage{502}--\blpage{520}
(\byear{2015})
\end{barticle}
\endbibitem

\bibitem[\protect\citeauthoryear{Dong et~al.}{2018}]{dong}
\begin{barticle}
\bauthor{\bsnm{Dong}, \binits{Q.}},
\bauthor{\bsnm{Cho}, \binits{Y.}},
\bauthor{\bsnm{Zhong}, \binits{L.}},
\bauthor{\bsnm{Rassias}, \binits{T.M.}}:
\batitle{Inertial projection and contraction algorithms for variational
  inequalities}.
\bjtitle{J. Global Optim.}
\bvolume{70},
\bfpage{687}--\blpage{704}
(\byear{2018})
\end{barticle}
\endbibitem

\bibitem[\protect\citeauthoryear{Yang and Liu}{2018}]{yang2018}
\begin{barticle}
\bauthor{\bsnm{Yang}, \binits{J.}},
\bauthor{\bsnm{Liu}, \binits{H.}}:
\batitle{A modified projected gradient method for monotone variational
  inequalities}.
\bjtitle{J. Optim. Theory Appl.}
\bvolume{179},
\bfpage{197}--\blpage{211}
(\byear{2018})
\end{barticle}
\endbibitem

\bibitem[\protect\citeauthoryear{Liu and Yang}{2020}]{yang3}
\begin{barticle}
\bauthor{\bsnm{Liu}, \binits{H.}},
\bauthor{\bsnm{Yang}, \binits{J.}}:
\batitle{Weak convergence of iterative methods for solving quasimonotone
  variational inequalities}.
\bjtitle{Comput. Optim. Appl.}
\bvolume{77},
\bfpage{491}--\blpage{508}
(\byear{2020})
\end{barticle}
\endbibitem

\bibitem[\protect\citeauthoryear{Shehu et~al.}{2022}]{shehu}
\begin{botherref}
\oauthor{\bsnm{Shehu}, \binits{Y.}},
\oauthor{\bsnm{Iyiola}, \binits{O.S.}},
\oauthor{\bsnm{Reich}, \binits{S.}}:
A modified inertial subgradient extragradient method for solving variational
  inequalities.
Optim. Eng.,
1--29
(2022)
\end{botherref}
\endbibitem

\bibitem[\protect\citeauthoryear{Yao et~al.}{2022}]{yao}
\begin{barticle}
\bauthor{\bsnm{Yao}, \binits{Y.}},
\bauthor{\bsnm{Iyiola}, \binits{O.S.}},
\bauthor{\bsnm{Shehu}, \binits{Y.}}:
\batitle{Subgradient extragradient method with double inertial steps for
  variational inequalities}.
\bjtitle{J. Sci. Comput.}
\bvolume{90},
\bfpage{1}--\blpage{29}
(\byear{2022})
\end{barticle}
\endbibitem

\bibitem[\protect\citeauthoryear{Tan et~al.}{2023}]{Tan2023}
\begin{barticle}
\bauthor{\bsnm{Tan}, \binits{B.}},
\bauthor{\bsnm{Li}, \binits{S.}},
\bauthor{\bsnm{Cho}, \binits{S.Y.}}:
\batitle{Inertial projection and contraction methods for pseudomonotone
  variational inequalities with non-{L}ipschitz operators and applications}.
\bjtitle{Appl. Anal.}
\bvolume{102}(\bissue{4}),
\bfpage{1199}--\blpage{1221}
(\byear{2023})
\end{barticle}
\endbibitem

\bibitem[\protect\citeauthoryear{Shehu et~al.}{2019}]{shehu2019}
\begin{barticle}
\bauthor{\bsnm{Shehu}, \binits{Y.}},
\bauthor{\bsnm{Dong}, \binits{Q.}},
\bauthor{\bsnm{Jiang}, \binits{D.}}:
\batitle{Single projection method for pseudo-monotone variational inequality in
  {H}ilbert spaces}.
\bjtitle{Optimization}
\bvolume{68}(\bissue{1}),
\bfpage{385}--\blpage{409}
(\byear{2019})
\end{barticle}
\endbibitem

\bibitem[\protect\citeauthoryear{Yang and Liu}{2019}]{yang2019strong}
\begin{barticle}
\bauthor{\bsnm{Yang}, \binits{J.}},
\bauthor{\bsnm{Liu}, \binits{H.}}:
\batitle{Strong convergence result for solving monotone variational
  inequalities in {H}ilbert space}.
\bjtitle{Numer. Algorithms}
\bvolume{80},
\bfpage{741}--\blpage{752}
(\byear{2019})
\end{barticle}
\endbibitem

\bibitem[\protect\citeauthoryear{Antipin}{1976}]{an}
\begin{barticle}
\bauthor{\bsnm{Antipin}, \binits{A.S.}}:
\batitle{On a method for convex programs using a symmetrical modifification of
  the lagrange function}.
\bjtitle{Ekon. I Mat. Metody}
\bvolume{12},
\bfpage{1164}--\blpage{1173}
(\byear{1976})
\end{barticle}
\endbibitem

\bibitem[\protect\citeauthoryear{Iusem}{1994}]{Iusem}
\begin{barticle}
\bauthor{\bsnm{Iusem}, \binits{A.N.}}:
\batitle{An iterative algorithm for the variational inequality problem}.
\bjtitle{Comput. Appl. Math.}
\bvolume{13}(\bissue{2}),
\bfpage{103}--\blpage{114}
(\byear{1994})
\end{barticle}
\endbibitem

\bibitem[\protect\citeauthoryear{Thong and Vuong}{2019}]{thong201911}
\begin{barticle}
\bauthor{\bsnm{Thong}, \binits{D.V.}},
\bauthor{\bsnm{Vuong}, \binits{P.T.}}:
\batitle{Modified {T}seng's extragradient methods for solving pseudo-monotone
  variational inequalities}.
\bjtitle{Optimization}
\bvolume{43}(\bissue{4}),
\bfpage{2207}--\blpage{2226}
(\byear{2019})
\end{barticle}
\endbibitem

\bibitem[\protect\citeauthoryear{Thong et~al.}{2020}]{thong2020weak}
\begin{barticle}
\bauthor{\bsnm{Thong}, \binits{D.V.}},
\bauthor{\bsnm{Shehu}, \binits{Y.}},
\bauthor{\bsnm{Iyiola}, \binits{O.S.}}:
\batitle{Weak and strong convergence theorems for solving pseudo-monotone
  variational inequalities with non-{L}ipschitz mappings}.
\bjtitle{Numer. Algorithms}
\bvolume{84},
\bfpage{795}--\blpage{823}
(\byear{2020})
\end{barticle}
\endbibitem

\bibitem[\protect\citeauthoryear{Cai et~al.}{2021}]{cai2021strong}
\begin{barticle}
\bauthor{\bsnm{Cai}, \binits{G.}},
\bauthor{\bsnm{Dong}, \binits{Q.L.}},
\bauthor{\bsnm{Peng}, \binits{Y.}}:
\batitle{Strong convergence theorems for solving variational inequality
  problems with pseudo-monotone and non-{L}ipschitz operators}.
\bjtitle{J. Optim. Theory Appl.}
\bvolume{188},
\bfpage{447}--\blpage{472}
(\byear{2021})
\end{barticle}
\endbibitem

\bibitem[\protect\citeauthoryear{Xie et~al.}{2025}]{xie2025modified}
\begin{barticle}
\bauthor{\bsnm{Xie}, \binits{Z.}},
\bauthor{\bsnm{Wu}, \binits{H.}},
\bauthor{\bsnm{Liu}, \binits{L.}}:
\batitle{Modified projection method and strong convergence theorem for solving
  variational inequality problems with non-{L}ipschitz operators}.
\bjtitle{Numer. Algorithms}
\bvolume{98}(\bissue{4}),
\bfpage{1755}--\blpage{1780}
(\byear{2025})
\end{barticle}
\endbibitem

\bibitem[\protect\citeauthoryear{Vanderbei}{1991}]{vanderbei1991}
\begin{botherref}
\oauthor{\bsnm{Vanderbei}, \binits{R.}}:
Uniform continuity is almost lipschitz continuity.
Statistics and Operations Research Series SOR-91
\textbf{11}
(1991)
\end{botherref}
\endbibitem

\bibitem[\protect\citeauthoryear{Bauschke and Combettes}{2017}]{bau}
\begin{botherref}
\oauthor{\bsnm{Bauschke}, \binits{H.H.}},
\oauthor{\bsnm{Combettes}, \binits{P.L.}}:
Convex Analysis and Monotone Operator Theory in Hilbert Spaces.
2nd edn. Springer, Berlin
(2017)
\end{botherref}
\endbibitem

\bibitem[\protect\citeauthoryear{Denisov et~al.}{2015}]{denisov2015}
\begin{barticle}
\bauthor{\bsnm{Denisov}, \binits{S.}},
\bauthor{\bsnm{Semenov}, \binits{V.}},
\bauthor{\bsnm{Chabak}, \binits{L.}}:
\batitle{Convergence of the modified extragradient method for variational
  inequalities with non-{L}ipschitz operators}.
\bjtitle{Cybern. Syst. Anal.}
\bvolume{51},
\bfpage{757}--\blpage{765}
(\byear{2015})
\end{barticle}
\endbibitem

\bibitem[\protect\citeauthoryear{Polyak}{1987}]{polyak1987}
\begin{bbook}
\bauthor{\bsnm{Polyak}, \binits{B.T.}}:
\bbtitle{Introduction to Optimization}.
\bpublisher{Optimization Software Inc.},
\blocation{New York}
(\byear{1987})
\end{bbook}
\endbibitem

\end{thebibliography}

\end{document}